\newtheorem{thm}{Theorem}[section]
\newtheorem{lem}[thm]{Lemma}
\newtheorem{cor}[thm]{Corollary}
\newtheorem{prop}[thm]{Proposition}
\newtheorem{rem}[thm]{Remark}
\DeclareMathAlphabet{\mathpzc}{OT1}{pzc}{m}{it}
\numberwithin{equation}{section}
\newcommand{\R}{\mathbb{R}}
\newcommand{\ve}{\varepsilon}
\newcommand{\e}{\varepsilon}
\newcommand{\rd}{\mathrm{d}}
\newcommand{\dhr}{\mathrel{\lhook\joinrel\relbar\kern-.8ex\joinrel\lhook\joinrel\rightarrow}} 
\title[Fourth-order MEMS in three dimensions]
{On a three-dimensional free boundary problem modeling electrostatic MEMS}
\author{Philippe Lauren\c{c}ot}
\thanks{ Partially supported by the French-German PROCOPE project 30718ZG.}
\address{Institut de Math\'ematiques de Toulouse, UMR~5219, Universit\'e de Toulouse, CNRS \\ F--31062 Toulouse Cedex 9, France}
\email{laurenco@math.univ-toulouse.fr}
\author{Christoph Walker}
\address{Leibniz Universit\"at Hannover\\ Institut f\" ur Angewandte Mathematik \\ Welfengarten 1 \\ D--30167 Hannover\\ Germany}
\email{walker@ifam.uni-hannover.de}
\date{\today}
\begin{document}

\begin{abstract}
We consider the dynamics of an electrostatically actuated thin elastic plate  being clamped at its boundary above a rigid plate. The model includes the harmonic electrostatic potential in the three-dimensional time-varying region between the plates along with a fourth-order semilinear parabolic equation for the elastic plate deflection which is coupled to the square of the gradient trace of the electrostatic potential on this plate. The strength of the coupling is tuned by a parameter $\lambda$ proportional to the square of the applied voltage. We prove that this free boundary problem is locally well-posed in time and that for small values of $\lambda$ solutions exist globally in time. We also derive the existence of a branch of asymptotically stable stationary solutions for small values of $\lambda$ and non-existence of stationary solutions for large values thereof, the latter being restricted to a disc-shaped plate.
\end{abstract}

\keywords{MEMS, free boundary problem, stationary solutions}
\subjclass[2010]{35K91, 35R35, 35M33, 35Q74}

\maketitle

\section{Introduction and Main Results}

We focus on an idealized model for an electrostatically actuated microelectromechanical system (MEMS). The device is built of a thin conducting elastic plate being clamped at its boundary above a rigid conducting plate. A Coulomb force is induced across the device by holding the ground and the elastic plates at different electric potentials which results in a deflection of the elastic plate and thus in a change in geometry of the device, see Figure~\ref{MEMSfig}. An ubiquitous feature of such MEMS devices is the occurrence of the so-called ``pull-in'' instability which manifests above a critical  threshold for the voltage difference in a touchdown of the elastic plate on the rigid ground plate. Estimating this threshold value is of utmost interest in applications as it determines the stable operating regime for such a MEMS device. To set up a mathematical model we assume that the dynamics of the device can be fully described by the deflection of the elastic plate from its rest position (when no voltage difference exists) and the electrostatic potential in the varying region between the two plates. We further assume that the elastic plate in its rest position and the fixed ground plate can be described by a region $D$ in $\R^2$. After a suitable scaling the rigid ground plate is located at $z=-1$ and the rest position of the elastic plate is at $z=0$. If $u=u(t,x,y)$ for $t\ge 0$ and  $(x,y)\in D$ describes the vertical displacement of  the elastic plate from its rest position, then $u$ evolves in the damping dominated regime according to
\begin{equation}\label{u}
\partial_t u+\beta\Delta^2 u-\big(\tau+a\|\nabla u\|_2^2\big)\Delta u = - \lambda\,  |\nabla_\varepsilon\psi(t,x,y,u(t,x,y))|^2 \ ,\quad (x,y)\in D\ ,\quad t>0\,,
\end{equation}
with clamped boundary conditions
\begin{equation}\label{bcu}
u=\partial_\nu u=0\,  ,\quad (x,y)\in \partial D\ , \quad t>0\,,
\end{equation}
and initial condition
\begin{equation}\label{ic}
u(0,x,y)=u^0(x,y)\ ,\quad (x,y)\in D\ .
\end{equation}
Here we put
$$
\nabla_\varepsilon\psi:=\left(\varepsilon\partial_x\psi, \varepsilon\partial_y\psi,\partial_z\psi\right)\,,
$$
where $\ve>0$ is the aspect ratio of the device, i.e. the ratio between vertical and horizontal dimensions,  $\lambda>0$ is proportional to the square of the applied voltage difference, and $\psi=\psi(t,x,y,z)$ denotes the dimensionless electrostatic potential. The latter satisfies a rescaled Laplace equation
\begin{equation}\label{psi}
\varepsilon^2\partial_x^2\psi + \varepsilon^2\partial_y^2\psi +\partial_z^2\psi =0\ ,\quad (x,y,z)\in \Omega(u(t))\ ,\quad t>0\ ,
\end{equation}
in the cylinder
$$
\Omega(u(t)) := \left\{ (x,y,z)\in D\times (-1,\infty)\ :\ -1 < z < u(t,x,y) \right\}\,
$$
between the rigid ground plate at $z=-1$ and the deflected elastic plate. The boundary conditions for $\psi$ are 
\begin{equation}\label{bcpsi}
\psi(t,x,y,z)=\frac{1+z}{1+u(t,x,y)}\ ,\quad (x,y,z)\in  \partial\Omega(u(t))\,, \quad t>0 \  .
\end{equation}
In equation \eqref{u}, the fourth-order term $\beta \Delta^2 u$ with $\beta>0$ reflects plate bending while the linear second-order term $\tau \Delta u$ with $\tau\ge 0$ and the non-local second-order term $a\|\nabla u\|_{2}^2 \Delta u$ with $a\ge 0$ and
$$
\|\nabla u\|_{2}^2:=\int_D \vert\nabla u\vert^2\,\rd (x,y)
$$
account for external stretching and self-stretching forces generated by large oscillations, respectively. The right-hand side of \eqref{u} is due to  the electrostatic forces exerted on the elastic plate  and is tuned by the strength of the applied voltage difference which is accounted for by the parameter $\lambda$. The boundary conditions \eqref{bcu} mean that the elastic plate is clamped. According to \eqref{psi}-\eqref{bcpsi}, the electrostatic potential is harmonic in the region $\Omega(u)$ enclosed by the two plates with value~$1$ on the elastic plate and value~$0$ on the ground plate. We refer the reader e.g. to  \cite{EGG10,FargasEtAl,Bending2,PB03} and the references therein for more details on the derivation of the model.

Equations \eqref{u}-\eqref{bcpsi} feature a singularity which reflects the pull-in instability occurring when the elastic plate touches down on the ground plate. Indeed, when $u$ reaches the value $-1$ somewhere, the region $\Omega(u)$ gets disconnected. Moreover, the imposed boundary conditions \eqref{bcpsi} imply that the vertical derivative $\partial_z \psi(x,y,u(x,y))$ blows up at the touchdown point $(x,y)$ and, in turn, the right-hand side of \eqref{u} becomes singular. Questions regarding (non-)existence of stationary solutions and of global solutions to the evolution problem as well as the qualitative behavior of the latter are strongly related. Due to the intricate coupling of the possibly singular equation  \eqref{u} and the free boundary problem \eqref{psi}-\eqref{bcpsi} in non-smooth domains, answers are, however, not easy to obtain.

\begin{figure}
\centering\includegraphics[scale=.7]{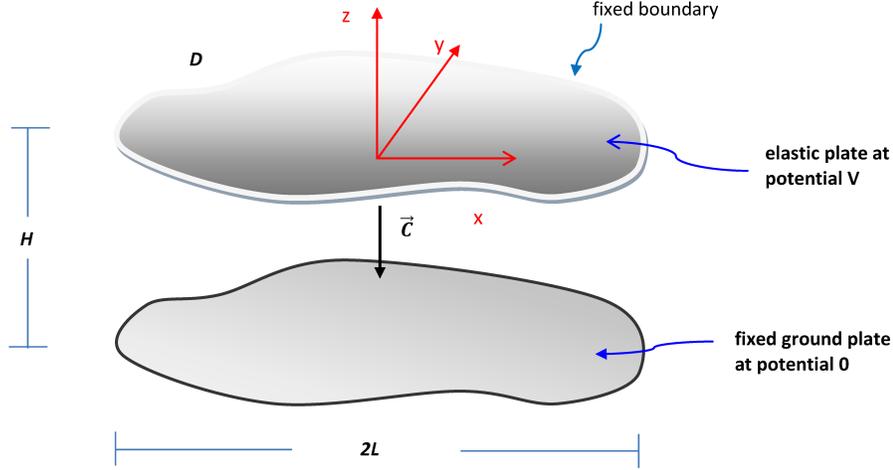}
\caption{\small Schematic diagram of an idealized electrostatic MEMS device.}\label{MEMSfig}
\end{figure}

It is thus not surprising that most mathematical research to date has been dedicated to various variants of the so-called {\it small gap model}, a relevant approximation of \eqref{u}-\eqref{bcpsi} obtained by formally setting $\ve=0$ therein. This approximation allows one to compute the electrostatic potential 
$$
\psi(t,x,z)=\frac{1+z}{1+u(t,x,y)}
$$ 
explicitly in dependence of $u$, the latter to be determined via 
\begin{equation*}
\partial_t u+\beta\Delta^2 u-\big(\tau+a\|\nabla u\|_2^2\big)\Delta u = - \lambda\, \frac{1}{(1+ u)^2}\ ,\quad (x,y)\in D\ , \quad t>0\,,
\end{equation*}
subject to \eqref{bcu} and \eqref{ic}. We note that this small gap approximation is a  singular equation but no longer a free boundary problem. We refer to \cite{EGG10, GLY14, LW14c, LL12} and the references therein for  more information on this case. 

The free boundary problem \eqref{u}-\eqref{bcpsi} has been investigated in a series of papers by the authors \cite{ELW1, LW13, Bending1, Bending2, LWxx}, though in a simpler geometry assuming $D$ to be a rectangle and presupposing zero variation in the $y$-direction, see also \cite{Li15} for the case of a non-constant permittivity profile. In this geometry the deflection $u=u(t,x)$ is independent of $y$ and $\psi$ is harmonic in a two-dimensional domain. In the present paper we remove this assumption and tackle for the first time the evolution problem with a three-dimensional domain $\Omega(u(t))$, assuming only a convexity property on $D$. More precisely, we assume in the following that
\begin{equation}\label{D}
D \text{ is a bounded and convex domain in } \R^2 \text{ with a $C^2$-smooth boundary.}
\end{equation}
A typical example for $D$ is a disc. We shall see later on that condition~\eqref{D} is used to obtain sufficiently smooth solutions to the elliptic problem \eqref{psi}-\eqref{bcpsi} in order for the trace of $\nabla_\varepsilon\psi$ to be well-defined on $\partial\Omega(u(t))$, a fact which is not clear at first glance as $\Omega(u(t))$ is only a Lipschitz domain. Still it turns out that the regularity of the square of the gradient trace of this solution occurring on the right-hand side of \eqref{u} is weaker than in the case of a two-dimensional domain $\Omega(u(t))$ studied in \cite{ELW1, LW13, Bending1, Bending2, LWxx} restricting us to study the fourth-order case $\beta>0$ only, see Remark~\ref{R1} for more information.

\medskip

The following result shows that \eqref{u}-\eqref{bcpsi} is locally well-posed in general and globally well-posed for small $a$ or small initial values provided that $\lambda$ is small as well.

\begin{thm}[{\bf Local and Global Well-Posedness}]\label{A}
Suppose \eqref{D}. Let $\ve>0$, $4\xi\in (7/3,4)$, and consider an initial value $u^0\in  W_{2}^{4\xi}(D)$ such that $u^0(x,y)>-1$ for $(x,y)\in D$ and $u^0=\partial_\nu u^0=0$ on $\partial D$. Then, the following are true:

\begin{itemize}

\item[(i)] For each voltage value $\lambda>0$, there is a unique solution $(u,\psi)$ to \eqref{u}-\eqref{bcpsi} on the maximal interval of existence $[0,T_m)$ in the sense that
\begin{equation}\label{reg}
u\in C\big([0,T_m), W_{2}^{4\xi}(D)\big) \cap C\big((0,T_m), W_{2}^4(D)\big) \cap C^1\big((0,T_m),L_2(D)\big) 
\end{equation}
satisfies \eqref{u}-\eqref{ic} together with
$$
u(t,x,y)>-1\ ,\quad (t,x,y)\in [0,T_m)\times D\ , 
$$ 
and $\psi(t)\in W_2^2\big(\Omega(u(t))\big)$ solves \eqref{psi}-\eqref{bcpsi} in $\Omega(u(t))$ for each $t\in [0,T_m)$. 

\item[(ii)] If, for each $T>0$, there is $\kappa(T)\in (0,1)$ such that 
$$
\|u(t)\|_{W_2^{4\xi}(D)}\le\kappa(T)^{-1}\ ,\quad  u(t)\ge -1+\kappa(T)\ \ \text{in}\ \ D
$$
for $t\in [0,T_m)\cap [0,T]$, then the solution exists globally, that is, $T_m=\infty$.

\item[(iii)] 
Given $\kappa\in (0,1/2)$, there exists $m_*:=m_*(\kappa,\varepsilon)>0$ such that $T_m=\infty$ and 
$$
\|u(t)\|_{W_2^{4\xi}(D)}\le\kappa^{-1}\ ,\quad  u(t)\ge -1+\kappa\ \text{in}\ D
$$ 
for $t\ge 0$, provided that $\lambda+ a\|\nabla u^0\|_{2}^2\le m_*$ and 
$$
\|u^0\|_{W_2^{4\xi}(D)}\le (2\kappa)^{-1}\ ,\quad  u^0\ge -1+2\kappa\ \text{in}\ D \ .
$$ 

\item[(iv)]  If $D$ is a disc in $\R^2$ and $u^0=u^0(x,y)$ is radially symmetric with respect to $(x,y)\in D$, then, {for all $t\in [0,T_m)$}, $u=u(t,x,y)$ and $\psi=\psi(t,x,y,z)$ are radially symmetric with respect to~$(x,y)\in D$.

\end{itemize}
\end{thm}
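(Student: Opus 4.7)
The natural strategy is to recast the free boundary problem as a semilinear parabolic equation for $u$ alone by solving the elliptic subproblem \eqref{psi}-\eqref{bcpsi} in terms of $u$ and feeding the resulting trace of $\nabla_\varepsilon\psi$ into the right-hand side of \eqref{u}. To handle the $u$-dependent domain $\Omega(u)$, I would pass to the fixed cylinder $\Omega_0:=D\times(0,1)$ via $\eta:=(1+z)/(1+u(x,y))$ and set $\phi(x,y,\eta):=\psi(x,y,z)$. Then $\phi$ satisfies an elliptic equation on $\Omega_0$ whose coefficients depend on $u$ and its first two derivatives, with Dirichlet data $\phi(\cdot,0)=0$, $\phi(\cdot,1)=1$, and $\phi(x,y,\eta)=\eta$ on $\partial D\times(0,1)$ (using $u=0$ on $\partial D$). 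The first main task is to establish $\phi\in W_2^2(\Omega_0)$ with norms controlled by $\|u\|_{W_2^{4\xi}(D)}$ and $\min_{\bar D}(1+u)$; this gives meaning to the trace of $\nabla_\varepsilon\psi$ on the upper plate $z=u(x,y)$ and allows one to define
$$
g(u)(x,y) := \lambda\,|\nabla_\varepsilon\psi(u)(x,y,u(x,y))|^2,\quad (x,y)\in D,
$$
as a locally Lipschitz map from the open admissible set $\mathcal{A}:=\{v\in W_2^{4\xi}(D):\ v>-1\text{ in }D,\ v=\partial_\nu v=0\text{ on }\partial D\}$ into $L_2(D)$.

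With $g$ in hand, \eqref{u}-\eqref{ic} becomes an abstract Cauchy problem $\partial_t u+\beta\Delta^2 u-(\tau+a\|\nabla u\|_2^2)\Delta u=-g(u)$ on $L_2(D)$ with clamped boundary conditions. Since $\beta\Delta^2$ under these conditions generates an analytic semigroup on $L_2(D)$ with domain $W_2^4(D)\cap\{v=\partial_\nu v=0\text{ on }\partial D\}$, and the second-order and nonlocal terms act as smooth lower-order perturbations, an application of Amann-type theory for quasilinear parabolic equations in the interpolation scale containing $W_2^{4\xi}(D)$ delivers (i) --- a unique maximal solution with the regularity \eqref{reg} --- together with the continuation criterion (ii), namely that $u$ can be continued as long as $\|u(t)\|_{W_2^{4\xi}(D)}$ remains bounded and $u(t)$ stays bounded away from $-1$, i.e., as long as $u(t)$ stays in a fixed closed subset of $\mathcal{A}$. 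For (iii), I would combine (ii) with a contraction argument in a small closed ball centered at $u^0$ in the space appearing in \eqref{reg}, the smallness of $\lambda+a\|\nabla u^0\|_2^2$ being used --- together with the analytic semigroup bounds and the Lipschitz estimate on $g$ --- to ensure that the nonlinear contributions on the right-hand side of \eqref{u} cannot drive the solution out of that ball, which yields $T_m=\infty$ and the claimed uniform estimates.

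Assertion (iv) follows by a symmetry-and-uniqueness argument. When $D$ is a disc centered at the origin and $R\in\mathrm{SO}(2)$, the domain $D$, the biharmonic and Laplace operators, the clamped boundary conditions, and the Dirichlet data for $\psi$ are all invariant under $(x,y)\mapsto R(x,y)$. Hence $(t,x,y,z)\mapsto(u(t,R(x,y)),\psi(t,R(x,y),z))$ is also a solution of \eqref{u}-\eqref{bcpsi} with initial datum $u^0\circ R$. If $u^0$ is radially symmetric, then $u^0\circ R=u^0$, and uniqueness in (i) forces $u(t,R(x,y))=u(t,x,y)$ and $\psi(t,R(x,y),z)=\psi(t,x,y,z)$ for every such $R$, which is the claimed radial symmetry.

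The main technical obstacle lies in the first step. Since $\Omega(u(t))$ is only a Lipschitz cylinder and $\Omega_0$ has edges along $\partial D\times\{0,1\}$, the classical $H^2$-theory for elliptic boundary value problems does not reach the boundary directly; the convexity of $D$ must be invoked in an essential way to rule out corner singularities and secure $\phi\in W_2^2(\Omega_0)$. Controlling the trace $|\nabla_\varepsilon\psi|^2$ on the upper plate as a Lipschitz function of $u$ with values in $L_2(D)$ --- which is precisely the regularity demanded by the parabolic framework --- is the heart of the argument and is what dictates the lower bound $4\xi>7/3$.
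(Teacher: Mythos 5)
Your proposal follows essentially the same strategy as the paper: transform the elliptic subproblem to the fixed cylinder $D\times(0,1)$, establish $W_2^2$-regularity and Lipschitz continuity of the resulting nonlinear map (the content of Proposition~\ref{L1}), recast \eqref{u}--\eqref{bcpsi} as a semilinear Cauchy problem $\partial_t u + Au = h(u)$ with $A=\beta\Delta^2-\tau\Delta$ generating an analytic semigroup, and then obtain (i)--(iii) by a contraction/semigroup argument and (iv) by rotational invariance plus uniqueness. You also correctly identify the convexity of $D$ as the key to the $W_2^2$ elliptic regularity in the Lipschitz cylinder and the role of $4\xi>7/3$ (which ensures $W_2^{4\xi}(D)\hookrightarrow W_3^2(D)$, the space in which the Lipschitz estimate for the nonlinearity is proved).
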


The global existence criterion stated in part (ii) of Theorem~\ref{A} involving a blow-up of some Sobolev-norm or occurrence of a touchdown is not yet optimal as the possible norm blow-up has rather mathematical than physical reasons. In the case of a two-dimensional domain $\Omega(u)$ this condition is superfluous as shown in \cite{Bending1}. Note that part (iii) of Theorem~\ref{A} provides uniform estimates on $u$ and ensures, in particular, that $u$ never touches down on $−1$, not even in infinite time.

Regarding existence of stationary solutions to \eqref{u}-\eqref{bcpsi} for an arbitrary convex domain $D$ we have:

\begin{thm}[{\bf Stationary Solutions}]\label{Stat}
Suppose \eqref{D}. Then, given $\ve>0$ and $\kappa\in (0,1)$, there are \mbox{$\delta:=\delta(\kappa,\ve)>~0$} and an analytic function $[\lambda\mapsto U_\lambda]:[0,\delta)\rightarrow W_2^4(D)$ such that $(U_\lambda,\Psi_\lambda)$ is for each $\lambda\in (0,\delta)$ an asymptotically stable stationary solution to \eqref{u}-\eqref{bcpsi} with $U_\lambda > -1+\kappa$ in $D$ and $\Psi_\lambda\in W_2^2(\Omega(U_\lambda))$. 
\end{thm}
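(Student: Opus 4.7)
The plan is to recast the stationary problem as the zero set of an analytic map $F:[0,\delta)\times V\to L_2(D)$, where $V$ is an open neighborhood of $0$ in the clamped Sobolev space
$$
W^4_{2,\mathcal{B}}(D):=\bigl\{v\in W^4_2(D)\,:\, v=\partial_\nu v=0\ \text{on}\ \partial D\bigr\},
$$
restricted to those $v$ with $v>-1$, and then to invoke the analytic implicit function theorem at $(\lambda,u)=(0,0)$. Writing $\Psi_u\in W^2_2(\Omega(u))$ for the unique solution of \eqref{psi}--\eqref{bcpsi} associated with an admissible $u$ (as constructed in the proof of Theorem~\ref{A}), set
$$
F(\lambda,u) := \beta\Delta^2 u - \bigl(\tau + a\|\nabla u\|_2^2\bigr)\Delta u + \lambda\, g(u),\qquad g(u)(x,y):=\bigl|\nabla_\varepsilon \Psi_u\bigl(x,y,u(x,y)\bigr)\bigr|^2,
$$
so that zeros of $F$ correspond exactly to stationary solutions of \eqref{u}--\eqref{bcpsi}.

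The first key step is to establish that $u\mapsto g(u)$ is analytic from $V$ into $L_2(D)$. This is achieved by the now-standard pull-back procedure already exploited in the two-dimensional geometry in \cite{ELW1,LW13,Bending1,Bending2,LWxx}: the diffeomorphism
$$
T_u(x,y,\eta):=\bigl(x,y,(1+u(x,y))\eta-1\bigr)
$$
maps $D\times(0,1)$ onto $\Omega(u)$, and $\Phi_u:=\Psi_u\circ T_u$ solves a transformed elliptic equation on the smooth convex cylinder $D\times(0,1)$ whose coefficients depend analytically on $u\in V$, with affine boundary data. Hypothesis \eqref{D} together with $W_2^2$-elliptic regularity on this convex reference cylinder yields $\Phi_u\in W_2^2(D\times(0,1))$; trace theorems and analytic composition with $T_u^{-1}$ then deliver analyticity of $g:V\to L_2(D)$, the convexity of $D$ and smoothness of $\partial D$ being exactly what enables a well-controlled gradient trace on the upper face.

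With $F$ analytic in hand, the analytic implicit function theorem applies at $(0,0)$. Indeed, at $u=0$ the cylinder $\Omega(0)=D\times(-1,0)$ is flat and $\Psi_0(x,y,z)=1+z$; since both the nonlocal term $a\|\nabla u\|_2^2\Delta u$ and the forcing $\lambda g(u)$ have vanishing Fréchet derivative with respect to $u$ at $(\lambda,u)=(0,0)$, one finds
$$
\partial_u F(0,0) = \beta\Delta^2 - \tau\Delta:\ W^4_{2,\mathcal{B}}(D)\longrightarrow L_2(D),
$$
which is an isomorphism by standard fourth-order elliptic theory on the $C^2$-smooth domain $D$ (being self-adjoint, coercive, and with compact inverse). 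The analytic implicit function theorem therefore provides $\delta>0$ and an analytic branch $[\lambda\mapsto U_\lambda]:[0,\delta)\to W^4_{2,\mathcal{B}}(D)$ of zeros of $F$ with $U_0=0$. Shrinking $\delta$ if needed, continuity of $\lambda\mapsto U_\lambda$ in $C(\bar D)$ guarantees $U_\lambda>-1+\kappa$, and setting $\Psi_\lambda:=\Psi_{U_\lambda}\in W_2^2(\Omega(U_\lambda))$ completes the existence half.

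Asymptotic stability follows from the principle of linearized stability. Linearizing \eqref{u} around $U_\lambda$ produces a sectorial operator $\mathcal{A}_\lambda$ on $L_2(D)$ whose principal part at $\lambda=0$ is $-\beta\Delta^2+\tau\Delta$ with clamped boundary conditions; this unperturbed operator is self-adjoint and negative definite, so its spectrum is discrete and contained in $(-\infty,0)$. The dependence $\lambda\mapsto\mathcal{A}_\lambda$ being continuous (in a resolvent sense inherited from the analyticity of $g$), upper semicontinuity of the spectrum ensures that the spectral bound of $\mathcal{A}_\lambda$ remains strictly negative for small $\lambda$, whence $U_\lambda$ is exponentially asymptotically stable in the $W_2^{4\xi}$-topology of Theorem~\ref{A}. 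The main technical obstacle is precisely the verification of analyticity (with quantitative control of the derivatives) of the coupling $u\mapsto g(u)$: the elliptic problem for $\Psi_u$ is posed on the $u$-dependent, merely Lipschitz three-dimensional domain $\Omega(u)$, and extracting an $L_2(D)$-valued trace of $|\nabla_\varepsilon\Psi_u|^2$ with analytic dependence on $u$ hinges essentially on the convexity hypothesis~\eqref{D} and the pull-back argument sketched above.
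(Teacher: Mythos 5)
Your proposal is correct and follows essentially the same route as the paper: both recast the stationary equation as an analytic zero problem with the analyticity of $u\mapsto g_\ve(u)$ (Proposition~\ref{L1}) as the key input, apply the (analytic) implicit function theorem at $(\lambda,u)=(0,0)$ using that $\beta\Delta^2-\tau\Delta$ is an isomorphism from $W^4_{2,\mathcal{B}}(D)$ onto $L_2(D)$, and then invoke the principle of linearized stability. The only cosmetic difference is that the paper precomposes with $A^{-1}$ so that $D_v\mathcal{F}(0,0)=\mathrm{id}$, whereas you keep $F$ valued in $L_2(D)$ with $\partial_u F(0,0)=A$; these are equivalent.
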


We shall point out that while Theorem~\ref{Stat} ensures the existence of at least one stationary solution for a fixed, sufficiently small voltage value $\lambda$, a recent result \cite{LWxx} yields a second one for (some of) these values in the two-dimensional case.

When $D$ is a disc in $\R^2$, additional information on stationary solutions can be retrieved, in particular a non-existence result when $a=0$. Roughly speaking, this additional information is provided by the fact that the operator $\beta\Delta^2-\tau\Delta$ satisfies the maximum principle when restricted to radially symmetric functions (see Section~\ref{Sec5} for more details).

\begin{thm}\label{StatDisc}
Assume that $D$ is a disc in $\R^2$ and let $\ve>0$.
\begin{itemize}
\item[(i)] For any $\kappa\in (0,1)$ and $\lambda\in (0,\delta(\kappa,\ve))$, the stationary solution $(U_\lambda,\Psi_\lambda)$ to \eqref{u}-\eqref{bcpsi} constructed in Theorem~\ref{Stat} enjoys the following properties: the function $U_\lambda$ is radially symmetric and $U_\lambda<0$ in $D$ while $\Psi_\lambda$ is radially symmetric in the first two variables $(x,y)$.

\item[(ii)] Assume that $a=0$. There are $\varepsilon_*>0$ and a function $\Lambda: (0,\varepsilon_*) \to (0,\infty)$ such that there is no radially symmetric stationary solution $(u,\psi)$ to \eqref{u}-\eqref{bcpsi} for $\lambda>\Lambda(\varepsilon)$ and $\varepsilon\in (0,\varepsilon_*)$.
\end{itemize}
\end{thm}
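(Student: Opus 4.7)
For part~(i), the key point is that problem~\eqref{u}-\eqref{bcpsi} is invariant under rotations of the $(x,y)$-plane about the center of the disc $D$. Theorem~\ref{Stat} produces $(U_\lambda,\Psi_\lambda)$ as the unique stationary solution in a small ball around the trivial solution $(0,1+z)$ via the analytic implicit function theorem. Given any rotation $R$ about the center of $D$, the pair $(U_\lambda\circ R,\Psi_\lambda\circ R)$ is again a stationary solution lying in the same ball, and therefore coincides with $(U_\lambda,\Psi_\lambda)$; this gives the claimed symmetry in $(x,y)$. For the sign, the stationary equation
\[
\beta\Delta^2 U_\lambda - \big(\tau + a\|\nabla U_\lambda\|_2^2\big)\Delta U_\lambda = -\lambda\,|\nabla_\varepsilon\Psi_\lambda(\cdot,U_\lambda)|^2 \le 0
\]
has a non-positive right-hand side, and on a disc the clamped operator $\beta\Delta^2 - \mu\Delta$ with $\mu\ge 0$ is positivity-preserving on radial functions (Section~\ref{Sec5}, a radial analogue of Boggio's principle). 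Applied with $\mu = \tau+a\|\nabla U_\lambda\|_2^2$, this yields $U_\lambda\le 0$, and a strong version of the principle together with the non-vanishing of $|\nabla_\varepsilon\Psi_\lambda|^2$ upgrades this to $U_\lambda<0$ in $D$.

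For part~(ii), argue by contradiction: assume $(u,\psi)$ is a radial stationary solution with $a=0$. Let $\varphi>0$ denote the radial first eigenfunction of $L:=\beta\Delta^2-\tau\Delta$ on $D$ with clamped boundary conditions and eigenvalue $\mu_1>0$, whose positivity is supplied by the radial Boggio-type principle. Applying this principle to $Lu=-\lambda|\nabla_\varepsilon\psi(\cdot,u)|^2\le 0$ forces $u\le 0$ in $D$. Multiplying by $\varphi$ and integrating by parts twice (the boundary terms vanish since $u=\partial_\nu u=0=\varphi=\partial_\nu\varphi$ on $\partial D$) yields
\[
\mu_1\int_D u\,\varphi\,\rd(x,y) \;=\; -\lambda\int_D |\nabla_\varepsilon\psi(x,y,u(x,y))|^2\,\varphi(x,y)\,\rd(x,y).
\]
The left-hand side has modulus at most $\mu_1\int_D\varphi$ since $-1<u\le 0$. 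The heart of the matter is a uniform positive lower bound on the integrand on the right. Flattening $\Omega(u)$ onto $D\times(0,1)$ via $\zeta=(1+z)/(1+u(x,y))$ and writing $\Phi(x,y,\zeta)=\psi(x,y,z)$, the constancy $\Phi(\cdot,1)\equiv 1$ annihilates the tangential derivatives on the upper plate, so
\[
|\nabla_\varepsilon\psi(x,y,u)|^2 \;=\; \frac{1+\varepsilon^2|\nabla u(x,y)|^2}{(1+u(x,y))^2}\,\Phi_\zeta(x,y,1)^2 \;\ge\; \frac{\Phi_\zeta(x,y,1)^2}{(1+u(x,y))^2}.
\]
The transformed elliptic problem for $\Phi$ on the fixed cylinder $D\times(0,1)$ has the form $\partial_\zeta^2\Phi+\varepsilon^2\mathcal{R}[u]\Phi=0$ for a bounded operator $\mathcal{R}[u]$, so for $\varepsilon\in(0,\varepsilon_*)$ small enough its solution is an $O(\varepsilon^2)$-perturbation of the one-dimensional profile $\Phi_0(\zeta)=\zeta$; uniform elliptic estimates then give $\Phi_\zeta(\cdot,1)\ge 1/2$, whence $|\nabla_\varepsilon\psi(x,y,u)|^2\ge c(\varepsilon)>0$ pointwise (in view of $u\le 0$). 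Combining the two displays, $\lambda\,c(\varepsilon)\int_D\varphi\le\mu_1\int_D\varphi$, so $\lambda\le\Lambda(\varepsilon):=\mu_1/c(\varepsilon)$, contradicting $\lambda>\Lambda(\varepsilon)$.

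The main obstacle is the $\varepsilon$-uniform lower bound on $\Phi_\zeta(\cdot,1)$. It demands elliptic regularity estimates for the transformed Laplace problem on $D\times(0,1)$ that do not deteriorate as the geometry of $\Omega(u)$ becomes complex, and this is exactly where the restriction to small $\varepsilon$ enters: smallness of $\varepsilon$ allows the horizontal derivatives to be handled perturbatively around the trivial one-dimensional model in $\zeta$, for which the lower bound on $(\Phi_0)_\zeta\equiv 1$ is immediate.
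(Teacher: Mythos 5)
Your argument for part~(i) is correct and is the paper's argument: rotation invariance of the problem together with the uniqueness provided by the implicit function theorem of Theorem~\ref{Stat} forces $U_\lambda$ (and hence $\Psi_\lambda$) to be radially symmetric, and a radial Boggio-type positivity principle for the clamped operator $\beta\Delta^2-\mu\Delta$ on a disc, applied to $AU_\lambda=-\lambda g_\ve(U_\lambda)<0$, yields $U_\lambda<0$.

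\textbf{Part~(ii).} You have correctly identified the backbone of the argument (a nonlinear eigenfunction technique with the positive radial first eigenfunction $\zeta_1$ and eigenvalue $\mu_1$ of $\beta\Delta^2-\tau\Delta$, together with the sign information $-1<u<0$ supplied by the radial Boggio principle), which is indeed what the paper adapts from \cite[Theorem~1.7~(ii)]{Bending1}. However, your key analytic step --- the claim that for $\ve$ small one has a \emph{uniform} pointwise lower bound $\partial_\zeta\Phi(\cdot,\cdot,1)\ge 1/2$ by treating $\partial_\zeta^2\Phi+\ve^2\mathcal{R}[u]\Phi=0$ perturbatively around the one-dimensional profile $\Phi_0(\zeta)=\zeta$ --- is not justified and is, as stated, false in the uniformity you need. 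The coefficients of the operator $\mathcal{L}_u$ on the cylinder involve $\nabla u/(1+u)$, $|\nabla u|^2/(1+u)^2$ and $\Delta u/(1+u)$, and a radial stationary solution $u$ of the nonlinear problem is only known a priori to satisfy $-1<u<0$; there is no a priori bound on $\|\nabla u\|_\infty$ or $\|\Delta u\|_\infty$ that would make $\mathcal{R}[u]$ bounded uniformly over all such $u$. Consequently the ``$O(\ve^2)$ perturbation'' and the resulting constant $c(\ve)$ have no uniform meaning, and the inequality $\lambda\le\mu_1/c(\ve)$ is not established. The paper (following \cite{Bending1}) avoids this precisely by not asking for a pointwise lower bound on $g_\ve(u)$: instead it introduces the auxiliary function $\mathcal{U}$ solving $-\Delta\mathcal{U}=u$ in $D$, $\mathcal{U}=0$ on $\partial D$, exploits the explicit radial bounds $|\partial_r\bar{\mathcal{U}}(r)|\le r/2$ and $|\partial_r^2\bar{\mathcal{U}}(r)|\le 3/2$ (which depend \emph{only} on $-1<u<0$, not on derivatives of $u$), and combines them with integration-by-parts identities for the harmonic potential $\psi$ to obtain an integrated lower bound that then feeds into the eigenfunction inequality. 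This is a genuinely different and essential ingredient, and you would need to supply it (or a correct substitute) to close the proof.
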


The outline of this paper is as follows. In Section~\ref{Sec2} we first investigate the elliptic problem \eqref{psi}-\eqref{bcpsi} for the electrostatic potential in dependence on a given deflection of the elastic plate. The main result of this section is Proposition~\ref{L1} which implies that \eqref{u}-\eqref{bcpsi} can be rewritten as a semilinear equation for the deflection $u$ only. The proof is rather involved and divided into several steps. Section~\ref{Sec3} is then devoted to the proof of Theorem~\ref{A} while the proof of Theorem~\ref{Stat} is given in Section~\ref{Sec4}. Finally, in Section~\ref{Sec5} we indicate how the proof of Theorem~\ref{StatDisc} can be carried out based on the corresponding proof for the two-dimensional case.

\section{The Electrostatic Potential Equation}\label{Sec2}

We first focus on the free boundary problem \eqref{psi}-\eqref{bcpsi} which we transform to the cylinder 
$$
\Omega:=D\times (0,1)\,.
$$
 More precisely, let $q\ge 2$ be fixed and consider an arbitrary function $v\in W_{q,\mathcal{B}}^2(D)$ taking values in $(-1,\infty)$, where
$$
W_{p,\mathcal{B}}^{\alpha}(D):=\left\{\begin{array}{ll}
\{w\in  W_{p}^{\alpha}(D)\ :\  w=\partial_\nu w=0 \text{  on } \partial D\}\ , & \alpha > 1+1/p\,,\\
& \\
\{w\in  W_{p}^{\alpha}(D)\ :\ w=0 \text{  on } \partial D\}\ , &\alpha \in (1/p,1+1/p)\,,\\
&\\
W_{p}^{\alpha}(D)\ , &0\le \alpha < 1/p\ ,\end{array}\right.
$$
for $p\ge 2$. We define 
$$
\Omega(v) := \left\{ (x,y,z)\in D\times (-1,\infty)\ :\ -1 < z < v(x,y) \right\}
$$
and consider the rescaled Laplace equation
\begin{eqnarray}
\varepsilon^2\partial_x^2\psi_v + \varepsilon^2\partial_y^2\psi_v +\partial_z^2\psi_v & = & 0\ ,\quad (x,y,z)\in \Omega(v)\ , \label{psiv} \\
\psi_v(x,y,z) & = & \frac{1+z}{1+v(x,y)}\ , \quad (x,y,z)\in \partial\Omega(v)\ . \label{bcpsiv}
\end{eqnarray}
Introducing the diffeomorphism $\mathcal{T}_v:\overline{\Omega(v)}\rightarrow \overline{\Omega}$ given by
\begin{equation}\label{Tu}
\mathcal{T}_v(x,y,z):=\left(x,y,\frac{1+z}{1+v(x,y)}\right)\ ,\quad (x,y,z)\in \overline{\Omega(v)}\ ,
\end{equation}
we note that its inverse is
\begin{equation}\label{Tuu}
\mathcal{T}_v^{-1}(x,y,\eta)=\big(x,y,(1+v(x,y))\eta-1\big)\ ,\quad (x,y,\eta)\in \overline{\Omega}\ ,
\end{equation}
and that the rescaled Laplace operator in \eqref{psiv} is transformed to the $v$-dependent differential operator 
\begin{equation*}
\begin{split}
\mathcal{L}_v w\, :=\, & \e^2\ \partial_x^2 w + \e^2\ \partial_y^2 w - 2\e^2\ \eta\ \frac{\partial_x v(x,y)}{1+v(x,y)}\ \partial_x\partial_\eta w - 2\e^2\ \eta\ \frac{\partial_y v(x,y)}{1+v(x,y)}\ \partial_y\partial_\eta w\\&
+ \frac{1+\e^2\eta^2 |\nabla v(x,y)|^2}{(1+v(x,y))^2}\ \partial_\eta^2 w + \e^2\ \eta\ \left[ 2\ \frac{|\nabla v(x,y)|^2}{(1+v(x,y))^2} - \frac{\Delta v(x,y)}{1+v(x,y)} \right]\ \partial_\eta w\ .
\end{split}
\end{equation*}
The boundary value problem  \eqref{psiv}-\eqref{bcpsiv} is then equivalent to
\begin{eqnarray}
\big(\mathcal{L}_{v} \phi_v\big) (x,y,\eta)\!\!\!&=0\ ,&(x,y,\eta)\in\Omega\ , \label{23}\\
\phi_v(x,y,\eta)\!\!\!&=\eta\ , &(x,y,\eta)\in \partial\Omega\ , \label{24}
\end{eqnarray}
via the transformation $\phi_v=\psi_v\circ \mathcal{T}_{v}^{-1}$. Observe that \eqref{23}-\eqref{24} is an elliptic equation with non-constant coefficients but in the fixed domain $\Omega$.

\medskip

We now aim at studying precisely the well-posedness of \eqref{23}-\eqref{24} as well as the regularity of its solutions. To this end, we introduce for $\kappa\in (0,1)$ and $p\ge 2$ the set
\begin{align*}
S_p(\kappa)  :=\left\{ w\in W_{p,\mathcal{B}}^2(D)\,:\, \|w\|_{W_{p,\mathcal{B}}^2(D)}< 1/\kappa \;\;\text{ and }\;\; -1+\kappa< w(x,y) \text{ for } (x,y)\in D \right\}\ , 
\end{align*}
with
\begin{align*}
\overline{S}_p(\kappa) & :=\left\{ w\in W_{p,\mathcal{B}}^2(D)\,:\, \|w\|_{W_{p,\mathcal{B}}^2(D)} \le 1/\kappa \;\;\text{ and }\;\; -1+\kappa \le w(x,y) \text{ for } (x,y)\in D \right\}\ 
\end{align*}
being its closure in $W_{p,\mathcal{B}}^2(D)$. The key result of this section reads:

\begin{prop}\label{L1}
Suppose \eqref{D}. Let $\ve>0$, $\kappa\in (0,1)$, and $q\ge 3$. For each $v\in S_q(\kappa)$ there is a unique solution $\phi_v\in W_2^2(\Omega)$ to \eqref{23}-\eqref{24}. Furthermore there is $c_1(\kappa,\ve)>0$ such that 
\begin{align}
\|\phi_{v_1}-\phi_{v_2}\|_{W_{2}^2(\Omega)} 
\le \ c_1(\kappa,\varepsilon)\ \|v_1-v_2\|_{W_q^2(D)}\ , \quad v_1, v_2 \in S_q(\kappa)\ ,\label{RR}
\end{align}
and the mapping
$$
g_\ve: S_q(\kappa)\longrightarrow L_2(D)\ ,\quad v\longmapsto \frac{1+\e^2 |\nabla v|^2}{(1+v)^2}\  \vert\partial_\eta\phi_v(\cdot,\cdot,1)\vert^2
$$
is analytic, globally Lipschitz continuous, and bounded. 
\end{prop}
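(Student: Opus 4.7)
The plan is to establish the three assertions in succession --- solvability of \eqref{23}--\eqref{24} in $W_2^2(\Omega)$, the Lipschitz estimate \eqref{RR}, and the properties of $g_\varepsilon$. I would first substitute $\phi_v = \eta + \Phi_v$ to turn \eqref{23}--\eqref{24} into a homogeneous Dirichlet problem $\mathcal{L}_v \Phi_v = -\mathcal{L}_v \eta$ for $\Phi_v$. Since $q\ge 3$ gives $W_q^2(D) \hookrightarrow W_\infty^1(D)$ in two space dimensions, the right-hand side $-\varepsilon^2 \eta [2|\nabla v|^2/(1+v)^2 - \Delta v/(1+v)]$ belongs to $L_2(\Omega)$. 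Since $\mathcal{L}_v$ is the pushforward of the rescaled Laplacian under $\mathcal{T}_v$, one can rewrite it in divergence form $-\nabla\cdot (A_v \nabla \cdot)$ with $A_v$ symmetric, Lipschitz, and uniformly elliptic on $S_q(\kappa)$; Lax--Milgram then produces a unique weak solution $\Phi_v$ with zero trace on $\partial\Omega$. The upgrade to $W_2^2(\Omega)$ is where the hypothesis \eqref{D} plays a decisive role: $\Omega = D\times(0,1)$ is then a convex Lipschitz domain, so Grisvard-type regularity for divergence-form elliptic equations with Lipschitz coefficients applies, with bounds that I would need to make quantitative in $\kappa$ and $\varepsilon$.

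For the Lipschitz estimate, given $v_1, v_2 \in S_q(\kappa)$, the difference $\Phi := \phi_{v_1}-\phi_{v_2}$ has zero trace on $\partial\Omega$ and satisfies $\mathcal{L}_{v_1} \Phi = (\mathcal{L}_{v_2}-\mathcal{L}_{v_1}) \phi_{v_2}$. A direct inspection of the coefficients of $\mathcal{L}_v$ shows that $v \mapsto \mathcal{L}_v$ is Lipschitz from $S_q(\kappa)$ into the relevant coefficient space; the only delicate term involves $\Delta v/(1+v)$ multiplying $\partial_\eta \phi_{v_2}$, for which H\"older's inequality with $\Delta v\in L_q(D)$ and $\partial_\eta \phi_{v_2}\in W_2^1(\Omega)\hookrightarrow L_6(\Omega)$ requires exactly $q\ge 3$. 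This yields $\|(\mathcal{L}_{v_2}-\mathcal{L}_{v_1})\phi_{v_2}\|_{L_2(\Omega)} \lesssim \|v_1-v_2\|_{W_q^2(D)}\,\|\phi_{v_2}\|_{W_2^2(\Omega)}$, and the $W_2^2$-estimate from the first step applied to $\Phi$ closes \eqref{RR}.

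Analyticity of $v\mapsto \phi_v$ would follow from the analytic implicit function theorem applied to \eqref{23}--\eqref{24}, using that the coefficients of $\mathcal{L}_v$ depend rationally on $v$ and $(1+v)^{-1}$ and that the linearization is an isomorphism by the first step. To land $g_\varepsilon(v)$ in $L_2(D)$, note that $\phi_v\in W_2^2(\Omega)$ implies $\partial_\eta \phi_v\in W_2^1(\Omega)$, whose trace on $D\times\{1\}$ lies in $W_2^{1/2}(D)$; the two-dimensional Sobolev embedding $W_2^{1/2}(D)\hookrightarrow L_4(D)$ then gives $|\partial_\eta\phi_v(\cdot,\cdot,1)|^2\in L_2(D)$, while $(1+\varepsilon^2|\nabla v|^2)/(1+v)^2$ is uniformly bounded on $S_q(\kappa)$ via $W_q^2(D)\hookrightarrow W_\infty^1(D)$. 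Global Lipschitz continuity and boundedness of $g_\varepsilon$ then follow from \eqref{RR} combined with the uniform bounds defining $S_q(\kappa)$.

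The main obstacle is the $W_2^2$-regularity in the first step: the cylinder $\Omega$ has edges along $\partial D\times\{0,1\}$ and is not $C^{1,1}$, so smooth-boundary elliptic theory does not apply directly. It is precisely the combination of convexity of $\Omega$ (inherited from that of $D$) and $C^2$-smoothness of $\partial D$ in \eqref{D} that brings the problem within reach of Grisvard-type regularity on convex Lipschitz domains with variable Lipschitz coefficients, and extracting a quantitative bound on $\|\phi_v\|_{W_2^2(\Omega)}$ in terms of $\kappa$ and $\varepsilon$ is the most delicate ingredient of the whole argument.
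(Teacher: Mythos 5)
Your overall architecture (homogenize via $\phi_v=\eta+\Phi_v$, Lax--Milgram for the weak solution, $W_2^2$-regularity via convexity, operator-difference estimate for \eqref{RR}, multiplication estimates for the range of $g_\varepsilon$) is the right skeleton, and the parts concerning weak solvability, the Lipschitz estimate, analyticity and the $L_2(D)$-range of $g_\varepsilon$ match the paper.

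The genuine gap is in the $W_2^2$-regularity step, where you propose to invoke ``Grisvard-type regularity for divergence-form elliptic equations with Lipschitz coefficients on the convex domain $\Omega$.'' But for $v\in S_q(\kappa)$ with $q\ge 3$ the coefficients $a_{ij}(v)$ of $-\mathcal{L}_v$ are \emph{not} Lipschitz on $\overline\Omega$: they involve $\nabla v/(1+v)$, and since $v$ is only $W_q^2(D)$, the derivatives $\partial_x a_{13}(v)$, $\partial_y a_{23}(v)$, etc. involve $D^2 v\in L_q(D)$, which is not $L_\infty$. So \cite[Theorem~3.2.1.2]{Grisvard} does not apply directly. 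The paper circumvents this with a two-stage bootstrap: (i) it first proves $W_2^2$-regularity \emph{only} for $v\in\overline S_\infty(\kappa)$ (Lemma~\ref{L2a}), where the coefficients truly are Lipschitz; (ii) it then establishes the intermediate a priori bound $\|\partial_\eta\Phi\|_{W_2^1(\Omega)}\le c(\kappa,\varepsilon)(\|\Phi\|_{L_2}+\|F\|_{L_2})$ (Lemma~\ref{L2b}), whose constant depends only on $\kappa$ and $\varepsilon$, so that it passes to general $v\in\overline S_q(\kappa)$ by approximation. The heart of (ii) is the integration-by-parts identity $\int_\Omega\partial_x^2\Phi\,\partial_\eta^2\Phi = \int_\Omega|\partial_x\partial_\eta\Phi|^2$ (Lemma~\ref{below}), proved by slicing $\Omega=D\times(0,1)$ in $y$ using the convexity of $D$ and \cite[Lemma~4.3.1.2]{Grisvard} on 2D rectangles; this is where the Cartesian product structure of $\Omega$ is exploited and is precisely the ingredient missing from your sketch. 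Only \emph{after} knowing $\partial_\eta\Phi\in W_2^1(\Omega)$ can the second-order cross terms $\partial_x\partial_\eta\Phi$, $\partial_y\partial_\eta\Phi$, $\partial_\eta^2\Phi$ be moved to the right-hand side (together with the lower-order term $\Delta v\cdot\partial_\eta\Phi/(1+v)$, handled by $L_q\cdot L_6\hookrightarrow L_2$ using $q\ge 3$), and the \emph{constant-coefficient} regularity of $\varepsilon^2\partial_x^2+\varepsilon^2\partial_y^2+\partial_\eta^2$ on the convex domain $\Omega$ then gives $\Phi\in W_2^2(\Omega)$ with the quantitative bound (Proposition~\ref{PLazy}). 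Without this two-step bootstrap and the gain of $\eta$-regularity, your appeal to Grisvard for variable Lipschitz coefficients fails.
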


Several steps are needed to prove Proposition~\ref{L1}. We begin with the analysis of the Dirichlet problem associated to $\mathcal{L}_v$. 

\begin{lem}\label{L2}
Suppose \eqref{D}. Let $\varepsilon>0$, $\kappa\in (0,1)$, and $q>2$. For each $v\in \overline{S}_q(\kappa)$ and $F\in L_2(\Omega)$, there exists a unique solution 
$$
\Phi\in W_{2,\mathcal{B}}^1(\Omega) := \{ w \in W_2^1(\Omega)\ :\ w = 0 \text{ on } \partial\Omega\}
$$ 
to the boundary value problem 
\begin{equation}
-\mathcal{L}_v\Phi =F\ \text{ in }\ \Omega\ , \qquad \Phi  =0\  \text{ on }\ \partial\Omega\ . \label{231}
\end{equation}
\end{lem}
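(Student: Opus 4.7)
My plan is to transform \eqref{231} via the diffeomorphism $\mathcal{T}_v$ into the Dirichlet problem for a constant-coefficient, uniformly elliptic operator on the Lipschitz domain $\Omega(v)$, solve it by the Lax-Milgram lemma, and transform the solution back. First I would observe that, since $q>2$, the Sobolev embedding $W_q^2(D)\hookrightarrow C^1(\overline D)$ together with $v\in\overline{S}_q(\kappa)$ yields uniform bounds on $\|v\|_{C^1(\overline D)}$ and on $\|(1+v)^{-1}\|_{C(\overline D)}$ in terms of $\kappa$. Consequently $\mathcal{T}_v:\overline{\Omega(v)}\to\overline\Omega$ and its inverse $\mathcal{T}_v^{-1}$ are bi-Lipschitz $C^1$-diffeomorphisms with constants depending only on $\kappa$, and the Jacobian determinant $\det D\mathcal{T}_v^{-1}=1+v$ is bounded above and below; in particular $\Omega(v)$ is a bounded Lipschitz domain.

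By construction of $\mathcal{L}_v$ as the pullback under $\mathcal{T}_v$ of the rescaled Laplacian $\Delta_\e:=\e^2\partial_x^2+\e^2\partial_y^2+\partial_\zeta^2$, a function $\Phi\in W_{2,\mathcal{B}}^1(\Omega)$ is a weak solution of \eqref{231} if and only if $\Psi:=\Phi\circ\mathcal{T}_v\in H_0^1(\Omega(v))$ weakly solves
\begin{equation*}
-\Delta_\e\Psi=F\circ\mathcal{T}_v\ \text{ in }\ \Omega(v),\qquad \Psi=0\ \text{ on }\ \partial\Omega(v).
\end{equation*}
The transformed problem is then handled by the Lax-Milgram lemma applied to the bilinear form
\begin{equation*}
(\Psi,\chi)\mapsto\int_{\Omega(v)}\bigl(\e^2\,\partial_x\Psi\,\partial_x\chi+\e^2\,\partial_y\Psi\,\partial_y\chi+\partial_\zeta\Psi\,\partial_\zeta\chi\bigr)\,\rd z
\end{equation*}
on $H_0^1(\Omega(v))$, which is clearly bounded and, by Poincaré's inequality on the bounded domain $\Omega(v)$, coercive with ellipticity constant $\min\{\e^2,1\}$; the right-hand side $\chi\mapsto\int_{\Omega(v)}(F\circ\mathcal{T}_v)\,\chi\,\rd z$ is a bounded linear functional on $H_0^1(\Omega(v))$ since $F\circ\mathcal{T}_v\in L_2(\Omega(v))$ with norm controlled by $\|F\|_{L_2(\Omega)}$. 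Lax-Milgram thus yields a unique $\Psi$, and the pushforward $\Phi:=\Psi\circ\mathcal{T}_v^{-1}\in W_{2,\mathcal{B}}^1(\Omega)$ is the required unique solution of \eqref{231}.

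The main (though routine) obstacle is the careful verification that the distributional formulations of the two boundary-value problems are genuinely equivalent under $\mathcal{T}_v$. Since $\mathcal{T}_v$ is merely $C^1$, this relies on the chain rule and the change-of-variables formula in Sobolev spaces under bi-Lipschitz maps, combined with the fact that $\mathcal{T}_v$ maps $\partial\Omega(v)$ onto $\partial\Omega$ so that the zero boundary trace is preserved. The uniform bi-Lipschitz bounds then ensure that all norms transform with constants depending only on $\kappa$ and $\e$, which will also be useful later for the continuous dependence estimate \eqref{RR}.
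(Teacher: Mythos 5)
Your approach is correct, but it takes a genuinely different route from the paper's. The paper works directly on the fixed cylinder $\Omega$: it writes $-\mathcal{L}_v$ in divergence form with coefficients $a_{ij}(v)$ and $b_i(v)$, verifies uniform ellipticity by computing the eigenvalues of the associated $3\times 3$ matrix $\mathcal{P}$ explicitly (obtaining $\mu_- \ge d/t$ with lower bound depending only on $\kappa,\varepsilon$), checks boundedness of the coefficients, and then invokes \cite[Theorem~8.3]{GilbargTrudinger} for existence and uniqueness of the weak solution. You instead exploit the fact that $\mathcal{L}_v$ is, by construction, the pullback of the constant-coefficient operator $\Delta_\varepsilon$ under $\mathcal{T}_v$, so the transformed problem on $\Omega(v)$ is a symmetric, trivially coercive form amenable to Lax--Milgram, avoiding both the eigenvalue computation and the non-symmetric first-order terms that force the paper to cite the Gilbarg--Trudinger machinery rather than a bare Lax--Milgram argument. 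What the paper's method buys is that all bookkeeping happens on a fixed domain with no need to justify the equivalence of weak formulations; what yours buys is structural transparency and a shorter ellipticity argument.

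One point worth making explicit rather than calling ``routine'': the weak formulation you obtain on $\Omega$ by pulling back the $H^1_0(\Omega(v))$-weak formulation via $\mathcal{T}_v$ is \emph{not} literally the paper's divergence-form weak formulation; the change of variables introduces the Jacobian factor $1+v$, producing the symmetric bilinear form $\sum_{ij}\int_\Omega (1+v)a_{ij}\,\partial_j\Phi\,\partial_i\omega$ tested against $(1+v)F$, with no first-order terms. To identify this with the paper's formulation $\sum_{ij}\int_\Omega a_{ij}\,\partial_j\Phi\,\partial_i\omega - \sum_i \int_\Omega b_i\,\partial_i\Phi\,\omega = \int_\Omega F\omega$, one replaces the test function $\omega$ by $\omega/(1+v)$ and checks the algebraic identity $b_j = \sum_i a_{ij}\,\partial_i v/(1+v)$ (using $\partial_\eta v=0$), which indeed holds for each $j$. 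This step requires only $v\in C^1(\overline D)$ and so is admissible for $v\in \overline{S}_q(\kappa)$ with $q>2$, but it should be recorded, as otherwise the uniqueness produced by Lax--Milgram on $\Omega(v)$ does not immediately transfer to uniqueness for the paper's weak formulation on $\Omega$.
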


\begin{proof}[{\bf Proof}]
Since $q>2$, the definition of $\overline{S}_q(\kappa)$ and Sobolev's embedding theorem guarantee the existence of some constant $c_0>0$ depending only on $q$ and $D$ such that, for $v\in \overline{S}_q(\kappa)$,
\begin{equation}\label{c2}
1+v(x,y)\ge \kappa\ ,\quad (x,y)\in D\ ,\quad\text{ and }\;\; \|v\|_{C^1(\bar D)}\le \frac{c_0}{\kappa}\ .
\end{equation}
We now claim that, due to \eqref{c2}, the operator $-\mathcal{L}_v$ is elliptic with ellipticity constant $\nu(\kappa,\varepsilon)>0$ being independent of $v\in  \overline{S}_q(\kappa)$. Indeed, for $(x,y,\eta)\in\Omega$, set
\begin{align*}
&a_{11}(v) :=  \varepsilon^2\,, \qquad & \qquad  a_{13}(v) = a_{31}(v) := - \varepsilon^2\ \eta\ \left( \frac{\partial_x v}{1+v} \right)(x,y)\,,  \\
&a_{22}(v):= \varepsilon^2\,, \qquad & \qquad a_{23}(v) = a_{32}(v) := - \varepsilon^2\ \eta\ \left( \frac{\partial_y v}{1+v} \right)(x,y)\,,  \\
& a_{33}(v)  := \left( \frac{1+\varepsilon^2\ \eta^2 |\nabla v|^2}{(1+v)^2} \right)(x,y)\,,  & b_1(v) := \varepsilon^2\ \left( \frac{\partial_x v}{1+v} \right)(x,y)\,, \\
& b_2(v) := \varepsilon^2\ \left( \frac{\partial_y v}{1+v} \right)(x,y)\,, 
& b_3(v) := - \varepsilon^2\ \eta \left( \frac{|\nabla v|^2}{(1+v)^2} \right)(x,y)\,, 
\end{align*}
which allows us to write $-\mathcal{L}_v$ in divergence form:
\begin{align*}
-\mathcal{L}_v w =& -\partial_x \left( a_{11}(v)\ \partial_x w + a_{13}(v)\ \partial_\eta w \right)
-\partial_y \left( a_{22}(v)\ \partial_y w + a_{23}(v)\ \partial_\eta w \right)\\
& -\partial_\eta \left( a_{31}(v)\ \partial_x w + a_{32}(v)\ \partial_y w+ a_{33}(v)\ \partial_\eta w \right)\\
& - b_1(v)\partial_x w -b_2(v)\partial_y w -b_3(v)\partial_\eta w\ .
\end{align*}
Denoting the principal of $-\mathcal{L}_v$ by $-\mathcal{L}_v^0$, that is,
\begin{align*}
-\mathcal{L}_v^0 w :=& -\partial_x \left( a_{11}(v)\ \partial_x w + a_{13}(v)\ \partial_\eta w \right)
-\partial_y \left( a_{22}(v)\ \partial_y w + a_{23}(v)\ \partial_\eta w \right)\\
& -\partial_\eta \left( a_{31}(v)\ \partial_x w + a_{32}(v)\ \partial_y w+ a_{33}(v)\ \partial_\eta w \right)\ ,
\end{align*}
and introducing the associated matrix
$$
\mathcal{P}:=\left(\begin{matrix} 
 \e^2 & 0 & \displaystyle{-\frac{\e^2\partial_x v(x,y)}{1+v(x,y)}\eta} \\ 
 0 & \e^2 & \displaystyle{-\frac{\e^2\partial_y v(x,y)}{1+v(x,y)}\eta}\\
\displaystyle{-\frac{\e^2\partial_x v(x,y)}{1+v(x,y)}\eta} & 
\displaystyle{-\frac{\e^2\partial_y v(x,y)}{1+v(x,y)}\eta}
& \displaystyle{\frac{1+\e^2\eta^2 |\nabla v(x,y)|^2}{(1+v(x,y)^2}}
\end{matrix}\right)
$$
we observe that, for $(x,y,\eta)\in\Omega$, the eigenvalues of $\mathcal{P}$ are $\e^2$ and
$$
\mu_\pm =\frac{1}{2}\big( t\pm\sqrt{t^2-4d}\big)
$$
with  $t$ and $d$ given by
$$
t:= \e^2 + \frac{1+\e^2\eta^2 |\nabla v(x,y)|^2}{(1+v(x,y))^2}\ ,\qquad d:= \frac{\e^2}{(1+v(x,y))^2}\ .
$$
By \eqref{c2}, 
$$
\mu_+ > \mu_- \ge \frac{d}{t} \ge \frac{\kappa \ve^2}{(1+\ve^2) \kappa^2 + 2 \ve^2 c_0^2}>0\ ,
$$
which implies that $-\mathcal{L}_v$ is elliptic with a positive ellipticity constant depending on $\kappa$ and $\e$ only. Furthermore, we infer from \eqref{c2} and the definition of $\overline{S}_q(\kappa)$ that 
\begin{equation}
\sum_{i,j=1}^3 \|a_{ij}(v)\|_{L_\infty(\Omega)} + \sum_{i=1}^3 \|b_i(v)\|_{L_\infty(\Omega)} \le c_2(\kappa,\varepsilon) \label{y4b}
\end{equation} 
for all $v\in \overline{S}_q(\kappa)$. It then follows from \cite[Theorem~8.3]{GilbargTrudinger} that, given $v\in \overline{S}_q(\kappa)$ and $F\in L_2(\Omega)$, the boundary value problem \eqref{231} has a unique weak solution $\Phi\in W_{2,\mathcal{B}}^1(\Omega)$. 
\end{proof}

Next, for smoother functions $v$, we make use of the convexity of $\Omega$ to gain more regularity on the solution to \eqref{231}.

\begin{lem}\label{L2a}
Suppose \eqref{D}. Let $\varepsilon>0$ and $\kappa\in (0,1)$. For each $v\in \overline{S}_\infty(\kappa)$ and $F\in L_2(\Omega)$, the weak solution $\Phi\in W_{2,\mathcal{B}}^1(\Omega)$ to \eqref{231} belongs to $W_{2,\mathcal{B}}^2(\Omega) := W_2^2(\Omega) \cap W_{2,\mathcal{B}}^1(\Omega)$.
\end{lem}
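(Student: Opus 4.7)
The plan is to exploit the convexity of the cylinder $\Om = D\times(0,1)$ together with the additional smoothness of $v$. Since $v\in\overline{S}_\infty(\kappa)$ lies in $W_{\infty,\mathcal{B}}^2(D)$, the boundary conditions force $v = \partial_\nu v = 0$ on $\partial D$, hence $\nabla v\equiv 0$ on $\partial D$, while $v\in W_\infty^2(D)\hookrightarrow C^{1,1}(\bar D)$ with $1+v\ge\kappa$. Inspecting the formulas for the coefficients introduced in the proof of Lemma~\ref{L2}, all the principal coefficients $a_{ij}(v)$ of $-\ml_v$ then belong to $W_\infty^1(\Om)$ and the lower-order coefficients $b_i(v)$ to $L_\infty(\Om)$, all with norms depending only on $\kappa$ and $\e$. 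Combined with the uniform ellipticity of $-\ml_v$ from Lemma~\ref{L2}, this makes $-\ml_v$ a second-order divergence-form elliptic operator with Lipschitz principal coefficients and bounded lower-order coefficients.

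Next I would note that the domain $\Om = D\times(0,1)$ is the Cartesian product of the bounded convex $C^2$-smooth domain $D$ with the interval $(0,1)$, hence a bounded convex Lipschitz domain whose boundary decomposes into the flat faces $D\times\{0,1\}$ and the $C^2$-smooth lateral face $\partial D\times(0,1)$, meeting along the two closed curves $\partial D\times\{0\}$ and $\partial D\times\{1\}$ at right dihedral angles. Every edge is therefore convex (interior angle $\pi/2 < \pi$), and $\Om$ itself is globally convex.

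The regularity $\Phi\in W_2^2(\Om)$ would now follow by combining: (i) interior elliptic regularity via the classical difference-quotient method; (ii) boundary regularity along the smooth pieces of $\partial\Om$ via tangential difference quotients (which yield the tangential second derivatives) plus use of the equation to recover the remaining normal second derivative; and (iii) a local convex-corner argument near each of the two convex right-angle edges---either by a direct appeal to Grisvard's $H^2$-regularity theorem for bounded convex domains applied after localization, or by an even reflection across one adjacent flat face which smooths out the edge into a $C^{1,1}$ boundary and reduces matters to step (ii). A partition of unity assembles these local $W_2^2$ estimates; together with the Dirichlet condition already obtained in Lemma~\ref{L2}, this gives $\Phi\in W_{2,\mathcal{B}}^2(\Om)$. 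An alternative that avoids handling the variable coefficients of $\ml_v$ directly is to first pull the problem back via $\mathcal{T}_v$ to $\Om(v)$, where (up to a harmless Jacobian factor in the weak formulation) the operator reduces to the constant-coefficient anisotropic Laplacian $\e^2(\partial_x^2+\partial_y^2)+\partial_z^2$; the same geometric analysis applies, the convex right-angle edges now lying on $\partial D\times\{-1\}$ and $\partial D\times\{0\}$ and the rest of $\partial\Om(v)$ being $C^{1,1}$ thanks to $\nabla v\equiv 0$ on $\partial D$ and $\partial D\in C^2$, after which one transfers back through the $W_\infty^2$-diffeomorphism $\mathcal{T}_v$.

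The principal obstacle is the non-smoothness of $\partial\Om$ at the two edges, which blocks any direct appeal to classical boundary-$H^2$ regularity on $C^{1,1}$ domains. The saving feature is that these edges have interior dihedral angle exactly $\pi/2$, hence are convex, so a local convex-corner argument closes the gap. The strengthening from $v\in\overline{S}_q(\kappa)$ in Lemma~\ref{L2} to $v\in\overline{S}_\infty(\kappa)$ here is precisely what upgrades the principal coefficients of $\ml_v$ to Lipschitz regularity and, equivalently, ensures that $\mathcal{T}_v$ is a $W_\infty^2$-diffeomorphism preserving $W_2^2$-regularity under transfer.
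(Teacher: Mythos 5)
Your proposal is correct and rests on essentially the same ingredients as the paper's proof: the convexity of $\Omega$, the $W_\infty^1$ regularity of the principal coefficients $a_{ij}(v)$ that follows from $v\in\overline{S}_\infty(\kappa)$, and $H^2$-regularity for convex domains. The paper's proof, however, is considerably more economical than the local-regularity machinery you outline: it first absorbs the lower-order terms $b_i(v)\partial_i\Phi$ into the right-hand side (the point being that $\Phi\in W_{2,\mathcal{B}}^1(\Omega)$ from Lemma~\ref{L2} already guarantees $G := F + \sum_i b_i(v)\partial_i\Phi \in L_2(\Omega)$), then applies Grisvard's Theorem~3.2.1.2 \emph{once, globally, on the whole convex domain} $\Omega$ to the resulting principal-part Dirichlet problem $-\mathcal{L}_v^0\widehat{\Phi}=G$, and finally identifies $\Phi=\widehat{\Phi}$ by uniqueness of weak solutions in $W_{2,\mathcal{B}}^1(\Omega)$. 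No interior/boundary difference-quotient argument, no partition of unity, and no edge-reflection step are needed, because the cited theorem is already stated for arbitrary bounded convex domains with $W_\infty^1$ principal coefficients and hence handles the right-angle edges of $\Omega=D\times(0,1)$ automatically. Your reflection-based alternative would also require checking that the $\eta$-parities of $a_{13},a_{23},a_{33}$ are compatible with odd reflection of $\Phi$ across a flat face — it happens to work here, but this is exactly the kind of bookkeeping the global Grisvard citation lets you skip.
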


\begin{proof}[{\bf Proof}]
Consider $F\in L_2(\Omega)$ and denote the corresponding weak solution to \eqref{231} by $\Phi \in W_{2,\mathcal{B}}^1(\Omega)$. The regularity of $\Phi$ and $v$  ensure that 
$$
G:= F + b_1(v)\ \partial_x\Phi +b_2(v)\ \partial_y\Phi + b_3(v)\ \partial_\eta\Phi \in L_2(\Omega)\ .
$$ 
Since $\Omega$ is convex and $a_{ij}(v) \in W_\infty^1(\Omega)$ for $1\le i,j\le 3$, we are in a position to apply \cite[Theorem~3.2.1.2]{Grisvard} to conclude that there is a unique solution $\widehat{\Phi}\in W_{2,\mathcal{B}}^2(\Omega)$ to the boundary value problem
\begin{equation}\label{AA}
-\mathcal{L}_v^0 \widehat{\Phi}  =G\ \text{ in }\ \Omega\ ,\qquad
\widehat{\Phi} =0\  \text{ on }\ \partial\Omega\ .
\end{equation}
where $\mathcal{L}_v^0$ is the principal part of the operator $\mathcal{L}_v$. It also follows from \cite[Theorem~8.3]{GilbargTrudinger} that \eqref{AA} has a unique weak solution in $W_{2,\mathcal{B}}^1(\Omega)$. Due to \eqref{231} and the definition of $G$,
the functions $\Phi$ and $\widehat{\Phi}$ are both weak solutions in $W_{2,\mathcal{B}}^1(\Omega)$ to \eqref{AA} and thus $\Phi=\widehat{\Phi}\in W_{2,\mathcal{B}}^2(\Omega)$. 
\end{proof}

The next step is to adapt the analysis performed in the two-dimensional case in \cite[Section~4]{LWxx} to derive an estimate on the $W_2^1(\Omega)$-norm of $\partial_\eta \Phi$ which is suitably uniform with respect to $v$. We begin with the following trace estimate.

\begin{lem}\label{le.lazy}
Suppose \eqref{D}. Let $p\in [2,4]$. There exists $c_3(p)>0$ such that
\begin{equation}
\|w(\cdot,\cdot,1)\|_{L_p(D)}^p \le c_3(p) \|w\|_{W_2^1(\Omega)}^{(3p-4)/2}  \| w\|_{L_2(\Omega)}^{(4-p)/2}\ , \qquad w\in W_2^1(\Omega)\ . \label{lazy00}
\end{equation}
\end{lem}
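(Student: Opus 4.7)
The strategy is to establish \eqref{lazy00} at the endpoint values $p=2$ and $p=4$ separately, and then interpolate between them by Hölder's inequality. As a preliminary sanity check, the two exponents $(3p-4)/2$ and $(4-p)/2$ sum to $p$, so a single log-convex interpolation between the $L_2$- and $L_4$-norms of the trace should suffice.

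For the endpoint $p=2$, the argument is the standard trace computation via the fundamental theorem of calculus in the $\eta$-variable. By density it is enough to argue for $w\in C^1(\overline{\Omega})$, in which case
$$
w(x,y,1)^2 = w(x,y,\eta)^2 + 2 \int_\eta^1 w(x,y,\zeta)\,\partial_\eta w(x,y,\zeta)\,\rd\zeta\,.
$$
Averaging over $\eta\in(0,1)$, integrating over $D$, and applying the Cauchy-Schwarz inequality yield
$$
\|w(\cdot,\cdot,1)\|_{L_2(D)}^2 \le \|w\|_{L_2(\Omega)}^2 + 2\|w\|_{L_2(\Omega)}\|\partial_\eta w\|_{L_2(\Omega)} \le C\,\|w\|_{W_2^1(\Omega)}\,\|w\|_{L_2(\Omega)}\,,
$$
which is exactly \eqref{lazy00} for $p=2$.

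For the endpoint $p=4$, I would invoke two classical tools. Since $\partial D\in C^2$ by \eqref{D}, the cylinder $\Omega=D\times(0,1)$ is sufficiently regular that the trace operator maps $W_2^1(\Omega)$ continuously into $W_2^{1/2}(D)$ when restricted to the top face $\{\eta=1\}$. The critical Sobolev embedding in two space dimensions then provides $W_2^{1/2}(D)\hookrightarrow L_4(D)$, so that $\|w(\cdot,\cdot,1)\|_{L_4(D)} \le C\,\|w\|_{W_2^1(\Omega)}$, which is \eqref{lazy00} for $p=4$. Finally, for $p\in(2,4)$ the logarithmic convexity of the $L_p$-norms gives
$$
\|w(\cdot,\cdot,1)\|_{L_p(D)}^p \le \|w(\cdot,\cdot,1)\|_{L_2(D)}^{4-p}\,\|w(\cdot,\cdot,1)\|_{L_4(D)}^{2p-4}\,,
$$
and substituting the two endpoint bounds yields precisely $C\,\|w\|_{W_2^1(\Omega)}^{(3p-4)/2}\,\|w\|_{L_2(\Omega)}^{(4-p)/2}$, completing the proof.

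The only non-elementary ingredient is the critical embedding $W_2^{1/2}(D)\hookrightarrow L_4(D)$ at the borderline Sobolev exponent, but this is classical for bounded planar domains with smooth boundary and can be cited without further work; no other obstacle is expected.
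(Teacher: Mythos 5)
Your proof is correct, and the bookkeeping checks out: the exponents sum to $p$, the endpoint bounds are right, and the log-convexity interpolation produces exactly the claimed exponents $(3p-4)/2$ and $(4-p)/2$. However, your route differs from the paper's. The paper proves the estimate for all $p\in[2,4]$ in a single pass: it applies the fundamental theorem of calculus in $\eta$ directly to $|w|^p$ (not to $w^2$), averages over $\eta$, uses H\"older to land on $\|w\|_{L_{2(p-1)}(\Omega)}^{p-1}\|w\|_{W_2^1(\Omega)}$, and then closes with the three-dimensional Gagliardo--Nirenberg inequality $\|w\|_{L_{2(p-1)}(\Omega)}^{p-1}\le c(p)\|w\|_{W_2^1(\Omega)}^{3(p-2)/2}\|w\|_{L_2(\Omega)}^{(4-p)/2}$. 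This makes no use of the trace theorem into $W_2^{1/2}$ nor of the critical embedding $W_2^{1/2}(D)\hookrightarrow L_4(D)$; the trace appears directly from the FTC and the cylindrical geometry. Your version splits into the two endpoints: the $p=2$ case is essentially the paper's argument specialized, while the $p=4$ case is obtained by citing the trace theorem $W_2^1(\Omega)\to W_2^{1/2}(D)$ on the top face and the borderline Sobolev embedding in two dimensions, followed by L\^opital-free interpolation. The trade-off is that your argument replaces one slightly nonstandard Gagliardo--Nirenberg exponent bookkeeping step by two classical citations plus interpolation; the paper's argument is self-contained at the cost of tracking a fractional interpolation exponent. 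Both are valid, and both exploit the product structure $\Omega=D\times(0,1)$ in the same spirit.
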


\begin{proof}[{\bf Proof}] Let $w\in W_2^1(\Omega)$. For $\eta\in (0,1)$ and $(x,y)\in D$, one has
$$
|w(x,y,1)|^p = |w(x,y,\eta)|^p + p \int_\eta^1 |w(x,y,z)|^{p-2} w(x,y,z)\ \partial_\eta w(x,y,z)\,\rd z\ .
$$
Integrating the above identity with respect to $(x,y)\in D$ gives
\begin{align*}
\|w(\cdot,\cdot,1)\|_{L_p(D)}^p & \le \int_D |w(x,y,\eta)|^p \,\rd(x,y) \\
& \quad + p \int_\Omega |w(x,y,z)|^{p-1} |\partial_\eta w(x,y,z)|\,\rd (x,y,z)\ .
\end{align*}
We next integrate with respect to $\eta\in (0,1)$ and use H\"older's inequality to obtain
\begin{align*}
\|w(\cdot,\cdot,1)\|_{L_p(D)}^p & \le \|w\|_{L_p(\Omega)}^p + p \|w\|_{L_{2(p-1)}(\Omega)}^{p-1} \|\partial_\eta w\|_{L_2(\Omega)} \\
& \le \|w\|_2 \|w\|_{L_{2(p-1)}(\Omega)}^{p-1} + p \|w\|_{L_{2(p-1)}(\Omega)}^{p-1} \|\partial_\eta w\|_{L_2(\Omega)} \\
& \le p \|w\|_{L_{2(p-1)}(\Omega)}^{p-1} \| w\|_{W_2^1(\Omega)} \ .
\end{align*}
We finally use the Gagliardo-Nirenberg inequality
$$
\|w\|_{L_{2(p-1)}(\Omega)}^{p-1} \le c(p) \|w\|_{W_2^1(\Omega)}^{3(p-2)/2}  \| w\|_{L_2(\Omega)}^{(4-p)/2}
$$
to complete the proof.
\end{proof}

\begin{lem}\label{L2b}
Suppose \eqref{D}. Let $\varepsilon>0$, $\kappa\in (0,1)$, and $q>2$. For each $v\in \overline{S}_q(\kappa)$ and $F\in L_2(\Omega)$, the weak solution $\Phi\in W_{2,\mathcal{B}}^1(\Omega)$ to \eqref{231} belongs to the Hilbert space $X(\Omega)$ defined by
$$
X(\Omega) := \left\{ w \in W_{2,\mathcal{B}}^1(\Omega)\ :\ \partial_\eta w \in W_2^1(\Omega) \right\}\ ,
$$
and there is $c_4(\kappa,\ve)>0$ such that
\begin{equation}
\|\Phi\|_{W_2^1(\Omega)} + \|\partial_\eta \Phi\|_{W_2^1(\Omega)} \le c_4(\kappa,\ve) \left( \|\Phi\|_{L_2(\Omega)} + \|F\|_{L_2(\Omega)} \right)\ . \label{cocv1}
\end{equation}
\end{lem}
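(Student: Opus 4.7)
I would prove Lemma~\ref{L2b} by approximation combined with a multiplier identity in the $\eta$-direction.

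\smallskip
\emph{Reduction to smooth coefficients.} Given $v\in \overline{S}_q(\kappa)$ with $q>2$, a standard mollification provides a sequence $(v_n)\subset \overline{S}_\infty(\kappa_n')\cap C^\infty(\bar D)$ converging to $v$ in $W_q^2(D)$ and satisfying $1+v_n\ge \kappa/2$ on $\bar D$ for $n$ large. Since $q>2$, Sobolev embedding yields $v_n\to v$ in $C^1(\bar D)$. By Lemma~\ref{L2a}, the Dirichlet problem $-\mathcal{L}_{v_n}\Phi_n = F$ in $\Omega$ admits a unique solution $\Phi_n\in W_{2,\mathcal{B}}^2(\Omega)\subset X(\Omega)$. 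Crucially, the ellipticity constant of $-\mathcal{L}_{v_n}$ and the coefficient bounds \eqref{y4b} remain uniform in $n$ (they depend only on $\kappa,\ve$). It therefore suffices to prove \eqref{cocv1} for $\Phi_n$ with a constant depending only on $\kappa,\ve$; the case of $\Phi$ then follows from weak compactness in $X(\Omega)$ and uniqueness (Lemma~\ref{L2}).

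\smallskip
\emph{Step 1 ($W_2^1$-bound).} Testing the equation against $\Phi_n$ and using uniform ellipticity, \eqref{y4b}, Cauchy--Schwarz, and Young's inequality yields
\[
\|\Phi_n\|_{W_2^1(\Omega)} \le c(\kappa,\ve)\bigl(\|F\|_{L_2(\Omega)} + \|\Phi_n\|_{L_2(\Omega)}\bigr).
\]

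\smallskip
\emph{Step 2 (normal regularity).} Multiply the pointwise identity $-\mathcal{L}_{v_n}\Phi_n = F$ by $-\partial_\eta^2\Phi_n\in L_2(\Omega)$ and integrate over $\Omega$. Two coercive contributions emerge: first,
\[
\int_\Omega a_{33}(v_n)\,(\partial_\eta^2\Phi_n)^2\,\rd(x,y,\eta) \ge c(\kappa,\ve)\,\|\partial_\eta^2\Phi_n\|_{L_2(\Omega)}^2,
\]
using $a_{33}(v_n)\ge(1+v_n)^{-2}\ge c(\kappa)$; second, integrating $\int_\Omega \ve^2(\partial_x^2+\partial_y^2)\Phi_n\cdot\partial_\eta^2\Phi_n$ by parts once in $x$ (resp.\ $y$) and then once in $\eta$ produces
\[
\int_\Omega \ve^2(\partial_x^2+\partial_y^2)\Phi_n\cdot\partial_\eta^2\Phi_n\,\rd(x,y,\eta) = \ve^2\|\partial_x\partial_\eta\Phi_n\|_{L_2(\Omega)}^2 + \ve^2\|\partial_y\partial_\eta\Phi_n\|_{L_2(\Omega)}^2.
\]
All resulting surface integrals vanish identically: on the lateral boundary $\partial D\times(0,1)$ the tangential derivatives $\partial_\eta\Phi_n$ and $\partial_\eta^2\Phi_n$ vanish because $\Phi_n=0$ there, while on the flat bases $D\times\{0,1\}$ the tangential derivatives $\partial_x\Phi_n$ and $\partial_y\Phi_n$ vanish. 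The remaining mixed terms $-2\ve^2\eta\,(\partial_i v_n)(1+v_n)^{-1}\,\partial_i\partial_\eta\Phi_n\cdot\partial_\eta^2\Phi_n$ for $i\in\{x,y\}$, the lower-order $b_i(v_n)$-contributions, and the right-hand side $-\int_\Omega F\,\partial_\eta^2\Phi_n$ are then absorbed by Young's inequality with small weights chosen in terms of the ellipticity constant and \eqref{y4b}. The residual $\|\nabla\Phi_n\|_{L_2(\Omega)}$- and $\|\partial_\eta\Phi_n\|_{L_2(\Omega)}$-terms are controlled by Step~1, yielding
\[
\|\partial_\eta\Phi_n\|_{W_2^1(\Omega)} \le c(\kappa,\ve)\bigl(\|F\|_{L_2(\Omega)} + \|\Phi_n\|_{L_2(\Omega)}\bigr)
\]
uniformly in $n$.

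\smallskip
\emph{Step 3 (passage to the limit).} By Steps 1--2, $(\Phi_n)$ is bounded in $X(\Omega)$ and has a weak limit $\tilde\Phi\in X(\Omega)$ along a subsequence. Using $v_n\to v$ in $C^1(\bar D)$ to pass to the limit in the weak formulation identifies $\tilde\Phi$ with the solution provided by Lemma~\ref{L2}, so $\tilde\Phi=\Phi$; weak lower semicontinuity of the $X(\Omega)$-norm then yields \eqref{cocv1}.

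\smallskip
\emph{Main obstacle.} The core difficulty lies in the bookkeeping of Step~2: one must verify that every surface contribution generated by the two successive integrations by parts genuinely vanishes --- a point that relies in an essential way on both the flatness of the bases $\{\eta=0,1\}$ and the lateral Dirichlet condition --- and then absorb the mixed-derivative terms with constants depending only on $\kappa$ and $\ve$. A subsidiary technical point is the construction of the approximating sequence $(v_n)\subset \overline{S}_\infty$, which must simultaneously preserve the lower bound $1+v_n\ge\kappa/2$, the clamped boundary conditions on $\partial D$, and uniform $W_q^2(D)$ (hence $C^1(\bar D)$) bounds, so that the ellipticity constant of $-\mathcal{L}_{v_n}$ does not degenerate.
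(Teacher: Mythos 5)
Your overall strategy --- mollify $v$, prove the estimate with constants depending only on $\kappa$ and $\ve$, then pass to the limit --- matches the paper's, and Step~1 is fine. Step~2, however, contains two genuine gaps, and they are precisely the points the paper expends the most effort on.

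First, the identity
\begin{equation*}
\int_\Omega \ve^2\bigl(\partial_x^2+\partial_y^2\bigr)\Phi_n\,\partial_\eta^2\Phi_n\,\rd(x,y,\eta)
 = \ve^2\|\partial_x\partial_\eta\Phi_n\|_{L_2(\Omega)}^2 + \ve^2\|\partial_y\partial_\eta\Phi_n\|_{L_2(\Omega)}^2
\end{equation*}
cannot be obtained by ``integrating by parts once in $x$ and then once in $\eta$'' when $\Phi_n$ is merely in $W_{2,\mathcal{B}}^2(\Omega)$. The intermediate quantity after the first integration by parts is $\partial_x\Phi_n\,\partial_x\partial_\eta^2\Phi_n$, a third derivative, which does not exist; and reversing the order requires a trace of $\partial_\eta^2\Phi_n$ on $D\times\{0,1\}$, which $W_2^2$ functions do not have. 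Mollifying $v$ does not help: $\Omega=D\times(0,1)$ has edges along $\partial D\times\{0,1\}$, so $\Phi_n$ is in general no better than $W_2^2(\Omega)$ even for $v_n\in C^\infty(\bar D)$. The paper establishes the identity as a separate result, Lemma~\ref{below}, by slicing in $y$ and invoking \cite[Lemma~4.3.1.2]{Grisvard} on the rectangles $D_{[y]}\times(0,1)$; this is exactly where the convexity assumption \eqref{D} is used, and it is the technically delicate point your sketch passes over.

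Second, the claim that the remaining lower-order contributions ``are absorbed by Young's inequality with small weights'' fails for the terms carrying $\Delta v_n$. Whether you expand the divergence (producing $(\partial_x a_{13}(v_n))\partial_\eta\Phi_n\,\partial_\eta^2\Phi_n$, etc.) or use the non-divergence form of $\mathcal{L}_{v_n}$, a factor $\Delta v_n$ appears that is bounded uniformly in $n$ only in $L_q(D)$, $q>2$, not in $L_\infty(D)$. H\"older then leaves you with a product in $L_q(D)\cdot L_2(\Omega)\cdot L_2(\Omega)$, which is not integrable, so the Young absorption cannot close. The paper's resolution is to write $\partial_\eta\Phi\,\partial_\eta^2\Phi = \tfrac12\partial_\eta(|\partial_\eta\Phi|^2)$ and integrate by parts in $\eta$, producing an interior term handled by H\"older, Gagliardo--Nirenberg, and Young (as in \eqref{lazy5}) and a trace term on $D\times\{1\}$ handled by the dedicated interpolation--trace inequality of Lemma~\ref{le.lazy} (as in \eqref{lazy6}). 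Without this mechanism the constant in \eqref{cocv1} does not come out depending only on $\kappa$ and $\ve$.
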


\begin{proof}[{\bf Proof}]
We first recall that, due to the continuous embedding of $W_q^2(D)$ in $W_\infty^1(D)$, there is a positive constant $c_0>0$ depending only on $q$ and $D$ such that, for $v\in \overline{S}_q(\kappa)$,
\begin{equation}\label{lazy0}
1+v(x,y)\ge \kappa\ ,\quad (x,y)\in D\ ,\quad\text{ and }\;\; \|v\|_{C^1(\bar D)}\le \frac{c_0}{\kappa}\ .
\end{equation}

Consider $F\in L_2(\Omega)$ and denote the corresponding weak solution to \eqref{231} by $\Phi \in W_{2,\mathcal{B}}^1(\Omega)$. We begin with an estimate for $\Phi$ in $W_2^1(\Omega)$ and first infer from \eqref{231} and the Divergence Theorem that
\begin{align*}
\int_\Omega F \Phi \,\rd (x,y,\eta) & = - \int_\Omega \Phi \mathcal{L}_v \Phi \,\rd (x,y,\eta) \\ 
& = \ve^2 \|P_x\|_{L_2(\Omega)}^2 + \ve^2 \|P_y\|_{L_2(\Omega)}^2 + \|P_\eta\|_{L_2(\Omega)}^2 \\
& \quad - \ve^2 \int_\Omega \left[ \frac{\partial_x v}{1+v} \Phi \partial_x\Phi + \frac{\partial_y v}{1+v} \Phi \partial_y\Phi - \eta \frac{|\nabla v|^2}{(1+v)^2} \Phi \partial_\eta\Phi \right] \,\rd(x,y,\eta)
\end{align*}
with
\begin{equation*}
P_x := \partial_x \Phi - \eta \frac{\partial_x v}{1+v} \partial_\eta\Phi\ , \quad P_y := \partial_y \Phi - \eta \frac{\partial_y v}{1+v} \partial_\eta\Phi\ , \quad P_\eta := \frac{\partial_\eta \Phi}{1+v}\ . 
\end{equation*}
Note that \eqref{lazy0} ensures that
\begin{align}
& \|\nabla \Phi\|_{L_2(\Omega)}^2 \le c(\kappa) \left[ \|P_x\|_{L_2(\Omega)}^2 + \|P_y\|_{L_2(\Omega)}^2 + \|P_\eta\|_{L_2(\Omega)}^2 \right]\ . \label{lazy0a}
\end{align}
Using \eqref{lazy0} and Cauchy-Schwarz' and Young's inequalities, we obtain
\begin{align*}
& \ve^2 \|P_x\|_{L_2(\Omega)}^2 + \ve^2 \|P_y\|_{L_2(\Omega)}^2 + \|P_\eta\|_{L_2(\Omega)}^2 \\
& \qquad = \int_\Omega F \Phi \,\rd (x,y,\eta)  + \ve^2 \int_\Omega \left[ \frac{\partial_x v}{1+v} \Phi P_x + \frac{\partial_y v}{1+v} \Phi P_y \right] \,\rd(x,y,\eta) \\
& \qquad \le \|F\|_{L_2(\Omega)} \|\Phi\|_{L_2(\Omega)} + \ve^2 \|\Phi\|_{L_2(\Omega)} \left\| \frac{|\nabla v|}{1+v} \right\|_{L_\infty(D)} \left[ \|P_x\|_{L_2(\Omega)} + \|P_y\|_{L_2(\Omega)} \right] \\
& \qquad \le \frac{\ve^2}{2} \left[ \|P_x\|_{L_2(\Omega)}^2 + \|P_y\|_{L_2(\Omega)}^2 \right] + c(\kappa,\ve) \left[ \| F \|_{L_2(\Omega)}^2 + \|\Phi\|_{L_2(\Omega)}^2 \right]\ ,
\end{align*}
whence
$$
\ve^2 \|P_x\|_{L_2(\Omega)}^2 + \ve^2 \|P_y\|_{L_2(\Omega)}^2 + \|P_\eta\|_{L_2(\Omega)}^2 \le c(\kappa,\ve) \left[ \| F \|_{L_2(\Omega)}^2 + \|\Phi\|_{L_2(\Omega)}^2 \right]\ . 
$$
Combining \eqref{lazy0a} with the above estimate gives
\begin{equation}
\| \Phi\|_{W_2^1(\Omega)}^2 \le c(\kappa,\ve) \left[ \| F \|_{L_2(\Omega)}^2 + \|\Phi\|_{L_2(\Omega)}^2 \right]\ . \label{lazy1}
\end{equation}

\medskip

We now turn to an estimate on $\partial_\eta \Phi$ in $W_2^1(\Omega)$ which is established first for smooth functions $v$, the constants appearing in the estimates depending only on $q$ and $\kappa$. Indeed, assume first that, besides being in $\overline{S}_q(\kappa)$, the function $v$ also belongs to $\overline{S}_\infty(\kappa')$ for some $\kappa'\in (0,1)$. Then $\Phi\in W_{2,\mathcal{B}}^2(\Omega)$ by Lemma~\ref{L2a} and, setting
$$
\zeta_x := \partial_{x}\partial_\eta\Phi\ , \quad \zeta_y := \partial_{y}\partial_\eta\Phi\ , \quad \zeta_\eta := \partial_\eta^2 \Phi\ ,
$$
it follows from Lemma~\ref{below} below that
\begin{equation}
\begin{split}
\int_\Omega \partial_x^2 \Phi\ \partial_\eta^2 \Phi \,\rd(x,y,\eta) & = \int_\Omega
|\zeta_x|^2 \,\rd(x,y,\eta)\ , \\
 \int_\Omega \partial_y^2 \Phi\ \partial_\eta^2 \Phi \,\rd(x,y,\eta) & = \int_\Omega |\zeta_y|^2 \,\rd(x,y,\eta)\ .
 \end{split} \label{lazy2}
\end{equation} 
We then infer from \eqref{231} and \eqref{lazy2} that
\begin{align*}
- \int_\Omega F\ \zeta_\eta \,\rd(x,y,\eta) & = \int_\Omega \partial_\eta^2\Phi\ \mathcal{L}_v\Phi \,\rd(x,y,\eta) \\
& = \ve^2 \|Q_x\|_{L_2(\Omega)}^2 + \ve^2 \|Q_y\|_{L_2(\Omega)}^2 + \|Q_\eta\|_{L_2(\Omega)}^2 \\
& \quad + \ve^2 \int_\Omega \eta \left( \frac{2|\nabla v|^2 - (1+v)\Delta v}{(1+v)^2} \right) \partial_\eta \Phi \ \partial_\eta^2\Phi \,\rd(x,y,\eta) \ ,
\end{align*}
where
\begin{equation*}
Q_x := \zeta_x - \eta \frac{\partial_x v}{1+v} \zeta_\eta\ , \quad Q_y := \zeta_y - \eta \frac{\partial_y v}{1+v} \zeta_\eta \ , \quad Q_\eta := \frac{\zeta_\eta}{1+v}\ . 
\end{equation*}
Owing to \eqref{lazy0} we note that
\begin{align}
& \|\nabla\partial_\eta\Phi\|_{L_2(\Omega)}^2 \le c(\kappa) \left[ \|Q_x\|_{L_2(\Omega)}^2 + \|Q_y\|_{L_2(\Omega)}^2 + \|Q_\eta\|_{L_2(\Omega)}^2 \right]\ . \label{lazy2a} 
\end{align}
Since $\partial_\eta \Phi \ \partial_\eta^2\Phi = \partial_\eta \left( |\partial_\eta \Phi|^2 \right)/2$, using integration by parts to handle the last term on the right-hand side of the above identity leads us to
\begin{eqnarray}
& & \ve^2 \|Q_x\|_{L_2(\Omega)}^2 + \ve^2 \|Q_y\|_{L_2(\Omega)}^2 + \|Q_\eta\|_{L_2(\Omega)}^2 \nonumber\\
& = & - \int_\Omega F \zeta_\eta \,\rd(x,y,\eta) + \frac{\ve^2}{2} \int_\Omega \left( \frac{2|\nabla v|^2 - (1+v) \Delta v}{(1+v)^2} \right) |\partial_\eta \Phi|^2 \,\rd(x,y,\eta) \nonumber \\
& & \quad - \frac{\ve^2}{2} \int_D \left( \frac{2|\nabla v|^2 -  (1+v) \Delta v}{(1+v)^2} \right) |\partial_\eta \Phi(x,y,1)|^2 \,\rd(x,y)\ . \label{lazy3}
\end{eqnarray}
We now estimate successively the three terms on the right-hand side of \eqref{lazy3} and begin with the first one which is the easiest. Indeed, it follows from \eqref{lazy0} and Cauchy-Schwarz' and Young's inequalities that
\begin{equation}
\left| - \int_\Omega F \zeta_\eta \,\rd(x,y,\eta) \right| \le \|(1+v) F\|_{L_2(\Omega)} \|Q_\eta\|_{L_2(\Omega)} \le \frac{1}{4} \|Q_\eta\|_{L_2(\Omega)}^2 + c(\kappa) \|F \|_{L_2(\Omega)}^2\ . \label{lazy4}
\end{equation} 
Next, introducing $q':= q/(q-1)\in (1,2)$, we infer from H\"older's and Gagliardo-Nirenberg inequalities that 
\begin{align*}
& \left| \int_\Omega \left( \frac{2|\nabla v|^2 - (1+v) \Delta v}{(1+v)^2} \right) |\partial_\eta \Phi|^2 \,\rd(x,y,\eta) \right| \\
& \quad \le \frac{1}{\kappa^2} \left[ 2 \|\nabla v\|_{L_\infty(D)} \|\nabla v\|_{L_q(D)} + \| 1+v\|_{L_\infty(D)} \|\Delta v\|_{L_q(D)} \right] \| \partial_\eta \Phi\|_{L_{2q'}(\Omega)}^2 \\
& \quad \le c(\kappa) \|\partial_\eta\Phi\|_{W_2^1(\Omega)}^{3/q} \|\partial_\eta\Phi \|_{L_2(\Omega)}^{(2q-3)/q} \\
& \quad = c(\kappa) \left( \|\partial_\eta \Phi\|_{L_2(\Omega)}^2 + \|\nabla\partial_\eta \Phi\|_{L_2(\Omega)}^2 \right)^{3/2q} \|\partial_\eta\Phi \|_{L_2(\Omega)}^{(2q-3)/q} \,.
\end{align*}
Using \eqref{lazy2a} and Young's inequality we end up with
\begin{align}
& \left| \int_\Omega \left( \frac{2|\nabla v|^2 - (1+v) \Delta v}{(1+v)^2} \right) |\partial_\eta \Phi|^2 \,\rd(x,y,\eta) \right| \nonumber \\ 
& \quad \le \frac{1}{2} \|Q_x\|_{L_2(\Omega)}^2 + \frac{1}{2} \|Q_y\|_{L_2(\Omega)}^2 + \frac{1}{2\ve^2} \|Q_\eta\|_{L_2(\Omega)}^2 + c(\kappa,\ve) \|\partial_\eta\Phi \|_{L_2(\Omega)}^2\ . \label{lazy5}
\end{align}
Similarly, since $2q'\in (2,4)$, H\"older's and Young's inequalities combined with \eqref{lazy2a} and Lemma~\ref{le.lazy} entail that
\begin{align}
& \left| \int_D \left( \frac{2|\nabla v|^2 -  (1+v) \Delta v}{(1+v)^2} \right) |\partial_\eta \Phi(x,y,1)|^2 \,\rd(x,y) \right| \nonumber \\
& \quad \le \frac{1}{\kappa^2} \left[ 2 \|\nabla v\|_{L_\infty(D)} \|\nabla v\|_{L_q(D)} + \|1+ v\|_{L_\infty(D)}\|\Delta v\|_{L_q(D)} \right] \| \partial_\eta \Phi(\cdot,\cdot,1)\|_{L_{2q'}(D)}^2 \nonumber \\
& \quad \le c(\kappa) \|\partial_\eta \Phi \|_{L_2(\Omega)}^{(q-2)/q} \|\partial_\eta \Phi \|_{W_2^1(\Omega)}^{(q+2)/q} \nonumber \\
& \quad \le \frac{1}{2} \|Q_x\|_{L_2(\Omega)}^2 + \frac{1}{2} \|Q_y\|_{L_2(\Omega)}^2 + \frac{1}{2\ve^2} \|Q_\eta\|_{L_2(\Omega)}^2 + c(\kappa,\ve) \|\partial_\eta\Phi \|_{L_2(\Omega)}^2\ . \label{lazy6}
\end{align}
Inserting \eqref{lazy4}-\eqref{lazy6} in \eqref{lazy3} leads us to 
$$
\ve^2 \|Q_x\|_{L_2(\Omega)}^2 + \ve^2 \|Q_y\|_{L_2(\Omega)}^2 + \|Q_\eta\|_{L_2(\Omega)}^2 \le c(\kappa,\ve) \left( \|F \|_{L_2(\Omega)}^2 + \|\partial_\eta\Phi \|_{L_2(\Omega)}^2 \right)\ ,
$$
from which we deduce, thanks to \eqref{lazy1} and \eqref{lazy2a}, that \eqref{cocv1} holds true. 

Since the estimate \eqref{cocv1} does not depend on the regularity of $v$, the fact that it extends to all functions in $\overline{S}_q(\kappa)$ follows by a classical approximation argument.
\end{proof}

It remains to prove the auxiliary result used in \eqref{lazy2} which is recalled in the following lemma.

\begin{lem}\label{below}
If $\Phi\in W_{2,\mathcal{B}}^2(\Omega)$, then 
\begin{equation*}
\begin{split}
\int_\Omega \partial_x^2 \Phi\ \partial_\eta^2 \Phi \,\rd(x,y,\eta) & = \int_\Omega
|\partial_{x}\partial_\eta\Phi|^2 \,\rd(x,y,\eta)\ , \\
 \int_\Omega \partial_y^2 \Phi\ \partial_\eta^2 \Phi \,\rd(x,y,\eta) & = \int_\Omega | \partial_{y}\partial_\eta\Phi|^2 \,\rd(x,y,\eta)\ .
 \end{split} 
\end{equation*}
\end{lem}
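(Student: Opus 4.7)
The plan is to establish both identities by integrating by parts twice---first in the $\eta$ variable, then in the horizontal variable---after reducing by density to a smooth subclass, since the intermediate step involves a third-order derivative that is not defined for general $\Phi\in W_{2,\mathcal{B}}^2(\Omega)$.

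For smooth representatives I would take finite linear combinations $\Phi_N$ of the separated eigenfunctions $e_k(x,y)\sin(n\pi\eta)$, where $(e_k)_{k\geq 1}$ is the $L_2$-normalized Dirichlet eigenbasis of the Laplacian on $D$ (well-defined because $D$ is $C^2$ and convex). Such $\Phi_N$ vanishes on all of $\partial\Omega$ and is $C^\infty$ in $\eta$, so a first integration by parts in $\eta$ turns $\int_\Omega \partial_x^2\Phi_N\,\partial_\eta^2\Phi_N\,\rd(x,y,\eta)$ into $-\int_\Omega \partial_x^2\partial_\eta\Phi_N\,\partial_\eta\Phi_N\,\rd(x,y,\eta)$, the boundary contribution at $\eta=0,1$ disappearing because $\Phi_N=0$ forces $\partial_x^2\Phi_N=0$ there. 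A weak integration by parts in $x$ then yields $\int_\Omega|\partial_x\partial_\eta\Phi_N|^2\,\rd(x,y,\eta)$ together with a boundary integral $\int_{\partial\Omega}\partial_x\partial_\eta\Phi_N\,\partial_\eta\Phi_N\,\nu_1\,\rd S$; this vanishes because $\nu_1=0$ on the horizontal faces $D\times\{0,1\}$, while on the lateral face $\partial D\times(0,1)$ the relation $\Phi_N(\cdot,\cdot,\eta)\equiv 0$ along $\partial D$ for every $\eta$ forces $\partial_\eta\Phi_N=0$ there. The second identity then follows by the symmetric argument with $y$ in place of $x$.

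Passing to general $\Phi\in W_{2,\mathcal{B}}^2(\Omega)$ rests on two observations: both sides are continuous quadratic forms on $W_2^2(\Omega)$, and, by the convexity of $\Omega$ and the regularity result from \cite[Theorem~3.2.1.2]{Grisvard} (the same tool used in Lemma~\ref{L2a}), the domain of the Dirichlet Laplacian on $\Omega$ coincides with $W_{2,\mathcal{B}}^2(\Omega)$ with graph norm equivalent to the $W_2^2$-norm. Consequently the partial sums $\Phi_N$ of the spectral expansion of $\Phi$ in the basis $\{e_k(x,y)\sin(n\pi\eta)\}$ converge to $\Phi$ in $W_2^2(\Omega)$, and the identity passes to the limit. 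The delicate step I expect is precisely this density argument, since $\partial\Omega$ is only Lipschitz (it has edges along $\partial D\times\{0,1\}$), so one cannot rely on boundary smoothness of eigenfunctions; convexity of $\Omega$ is what one must lean on both to place the Dirichlet eigenfunctions in $W_{2,\mathcal{B}}^2(\Omega)$ and to secure $W_2^2$-convergence of the spectral partial sums.
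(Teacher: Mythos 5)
Your proof is correct, but it proceeds by a route genuinely different from the paper's. The paper exploits the product and convexity structure of $\Omega=D\times(0,1)$ directly through a slicing argument: for almost every $y$ in the projection of $D$ onto the second axis, convexity of $D$ makes the section $D_{[y]}\times(0,1)$ a rectangle, Fubini--Tonelli together with Nikodym's ACL-characterization of Sobolev functions shows $\Phi(\cdot,y,\cdot)\in W_2^2(D_{[y]}\times(0,1))$ with vanishing boundary trace, the two-dimensional identity for rectangles from \cite[Lemma~4.3.1.2]{Grisvard} is applied slicewise, and one integrates back in $y$. You instead recognize $\{e_k(x,y)\sin(n\pi\eta)\}$ as the Dirichlet eigenbasis of $-\Delta$ on $\Omega$, verify the identity for finite spectral sums $\Phi_N$ by two integrations by parts (with the boundary contributions killed by $e_k\equiv 0$ on $\partial D$ and $\sin(n\pi\eta)=0$ at $\eta=0,1$), and then use the convexity of $\Omega$ via \cite[Theorem~3.2.1.2]{Grisvard} to show that the domain of the Dirichlet Laplacian is $W_{2,\mathcal{B}}^2(\Omega)$ with graph norm comparable to the $W_2^2$-norm, so that spectral truncations converge in $W_2^2(\Omega)$ and the identity passes to the limit since both sides are continuous quadratic forms there. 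Both arguments lean on convexity, but for different purposes: the paper needs it geometrically to obtain rectangular slices, you need it analytically for the elliptic $W_2^2$-regularity that secures density. What your approach buys is a self-contained verification that makes the role of the tensor-product eigenstructure explicit (and reuses the same regularity tool the paper already invokes in Lemma~\ref{L2a}); what the paper's buys is brevity, by offloading the core computation to a citable two-dimensional lemma and avoiding any discussion of operator domains or spectral convergence.
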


\begin{proof}
Since $\Omega=D\times (0,1)$ and $D$ is a convex subset of $\R^2$, the projection $pr_2(D)$ of $D$ onto the $y$-axis as well as the sections
$$
D_{[y]}:=\{x\in\R\,:\, (x,y)\in D\}\ ,\quad y\in pr_2(D)\,,
$$ 
are intervals. Moreover, the Fubini-Tonelli Theorem (together with Nikodym's characterization of Sobolev spaces via absolutely continuous functions, see \cite[Theorem~2.1.4]{Zi89}) implies that for a.a. $y\in  pr_2(D)$, the function $\Phi(\cdot,y,\cdot)$ belongs to $W_2^2(D_{[y]}\times (0,1))$ with $\Phi(\cdot,y,\cdot)=0$ on $\partial(D_{[y]}\times (0,1))$ since $\Phi\in W_{2,\mathcal{B}}^2(\Omega) \subset C(\overline{\Omega})$. Thus, $D_{[y]}\times (0,1)$ being a rectangle, we may apply \cite[Lemma~4.3.1.2]{Grisvard} to conclude that
$$
\int_{D_{[y]}\times (0,1)} \partial_x^2\Phi(x,y,\eta)\partial_\eta^2\Phi(x,y,\eta)\,\rd (x,\eta)=\int_{D_{[y]}\times (0,1)} \vert\partial_x\partial_\eta\Phi(x,y,\eta)\vert^2\,\rd (x,\eta) 
$$
for a.a. $y\in  pr_2(D)$. Recalling that $pr_2(\Omega)= pr_2(D)$ is measurable and 
$$
\Omega_{[y]}:=\{(x,\eta)\in\R\times\R\,:\, (x,y,\eta)\in \Omega\}=D_{[y]}\times (0,1)\,,
$$ 
the first assertion follows by integrating the above identity on $pr_2(\Omega)$ with respect to $y$ and using the Fubini-Tonelli Theorem.
The second assertion is analogous.
\end{proof}

To recover the full $W_2^2$-regularity of the solution to \eqref{231} we need to have slightly smoother functions $v$.

\begin{prop}\label{PLazy}
Suppose \eqref{D}. Let $\varepsilon>0$, $\kappa\in (0,1)$, and $q\ge 3$. For each $v\in \overline{S}_q(\kappa)$ and $F\in L_2(\Omega)$, the weak solution $\Phi\in W_{2,\mathcal{B}}^1(\Omega)$ to \eqref{231} belongs to $W_{2,\mathcal{B}}^2(\Omega)$ and there  is $c_5(\kappa,\ve)>0$ such that
\begin{equation}
\|\Phi\|_{W_2^2(\Omega)} \le c_5(\kappa,\ve) \|F\|_{L_2(\Omega)} \ . \label{lazy7}
\end{equation}
\end{prop}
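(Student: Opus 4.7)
The plan is to reduce~\eqref{231} to a two-dimensional Dirichlet problem in the $(x,y)$-variables, exploiting that Lemma~\ref{L2b} already controls in $L_2(\Omega)$ all second derivatives of $\Phi$ involving $\partial_\eta$. Concretely, I would rewrite $-\mathcal{L}_v\Phi = F$ in the form
\begin{equation*}
-\varepsilon^2(\partial_x^2\Phi + \partial_y^2\Phi) = G \ \text{in}\ \Omega\ , \qquad \Phi = 0 \ \text{on}\ \partial\Omega\ ,
\end{equation*}
with
\begin{equation*}
G := F + 2\varepsilon^2\eta\frac{\partial_x v}{1+v}\partial_x\partial_\eta\Phi + 2\varepsilon^2\eta\frac{\partial_y v}{1+v}\partial_y\partial_\eta\Phi - \frac{1+\varepsilon^2\eta^2|\nabla v|^2}{(1+v)^2}\partial_\eta^2\Phi - \varepsilon^2\eta\Bigl[2\frac{|\nabla v|^2}{(1+v)^2} - \frac{\Delta v}{1+v}\Bigr]\partial_\eta\Phi\ .
\end{equation*}

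The first key task is to show that $G\in L_2(\Omega)$, with a bound in terms of $\kappa$, $\varepsilon$, $\|F\|_{L_2(\Omega)}$ and $\|\partial_\eta\Phi\|_{W_2^1(\Omega)}$. All but one contribution are routine: since $q>2$, Sobolev's embedding $W_q^2(D)\hookrightarrow W_\infty^1(D)$ together with $v\in\overline{S}_q(\kappa)$ yields an $L_\infty(D)$-bound on $\nabla v/(1+v)$, so the terms involving $\partial_x\partial_\eta\Phi$, $\partial_y\partial_\eta\Phi$, $|\nabla v|^2\partial_\eta^2\Phi$ and $|\nabla v|^2\partial_\eta\Phi$ are all in $L_2(\Omega)$ by Lemma~\ref{L2b}. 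The delicate piece is $\Delta v\cdot\partial_\eta\Phi$: since $\Delta v$ belongs only to $L_q(D)$, I match this with the three-dimensional Sobolev embedding $W_2^1(\Omega)\hookrightarrow L_6(\Omega)$ applied to $\partial_\eta\Phi$ (available by Lemma~\ref{L2b}) and close the $L_2$-estimate via H\"older's inequality with the decomposition $\tfrac{1}{2}=\tfrac{1}{3}+\tfrac{1}{6}$, which requires precisely $\Delta v\in L_3(D)$, that is, $q\ge 3$. This is the main technical obstacle and explains the threshold $q\ge 3$ in the statement.

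Once $G\in L_2(\Omega)$ is established, for almost every $\eta\in(0,1)$ the slice $(x,y)\mapsto\Phi(x,y,\eta)$ lies in $W_2^1(D)$, vanishes on $\partial D$ (since $\Phi\in W_{2,\mathcal{B}}^1(\Omega)$), and solves the Poisson problem $-\varepsilon^2\Delta_{x,y}\Phi(\cdot,\cdot,\eta) = G(\cdot,\cdot,\eta)$ in $D$. Because $D$ is a bounded convex $C^2$-smooth domain by~\eqref{D}, \cite[Theorem~3.2.1.2]{Grisvard} yields $\Phi(\cdot,\cdot,\eta)\in W_2^2(D)$ together with an a priori estimate of $\|\Phi(\cdot,\cdot,\eta)\|_{W_2^2(D)}$ by $\|G(\cdot,\cdot,\eta)\|_{L_2(D)}$ (up to a lower-order $L_2$-term). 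Squaring and integrating over $\eta\in(0,1)$ then controls $\partial_x^2\Phi$, $\partial_y^2\Phi$ and $\partial_x\partial_y\Phi$ in $L_2(\Omega)$; combined with Lemma~\ref{L2b}, which handles the remaining second derivatives, this delivers
\begin{equation*}
\|\Phi\|_{W_2^2(\Omega)} \le c(\kappa,\varepsilon)\bigl(\|F\|_{L_2(\Omega)} + \|\Phi\|_{L_2(\Omega)}\bigr)\ .
\end{equation*}

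To absorb the $\|\Phi\|_{L_2(\Omega)}$-term on the right-hand side, I would invoke the uniqueness part of Lemma~\ref{L2}: a standard compactness-contradiction argument, relying on the Rellich embedding $W_2^1(\Omega)\hookrightarrow L_2(\Omega)$, estimate~\eqref{lazy1} and the stability of the weak formulation under weak limits, upgrades the inequality to $\|\Phi\|_{L_2(\Omega)}\le c(\kappa,\varepsilon)\|F\|_{L_2(\Omega)}$, whence~\eqref{lazy7} follows.
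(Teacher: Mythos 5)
Your proof is correct, but it takes a genuinely different route from the paper's. Both strategies move terms to the right-hand side and invoke Grisvard's Theorem~3.2.1.2, but they disagree on \emph{what} to isolate and \emph{where} to apply elliptic regularity. The paper keeps the full rescaled Laplacian $\varepsilon^2\partial_x^2+\varepsilon^2\partial_y^2+\partial_\eta^2$ on the left, producing a source term $J\in L_2(\Omega)$, and applies Grisvard's result \emph{once, in three dimensions}, to the convex cylinder $\Omega=D\times(0,1)$; it then cites the internal machinery of Grisvard's proof to extract the constant. You instead strip off \emph{all} $\eta$-derivatives, isolate only the horizontal Laplacian $-\varepsilon^2\Delta_{x,y}$, and apply \emph{two-dimensional} elliptic regularity in $D$ to a.e.\ slice $\Phi(\cdot,\cdot,\eta)$, then integrate in $\eta$. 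Both routes rest on exactly the same preliminary inputs: Lemma~\ref{L2b} giving $\partial_\eta\Phi\in W_2^1(\Omega)$, the two embeddings $W_q^2(D)\hookrightarrow W_\infty^1(D)$ and $W_2^1(\Omega)\hookrightarrow L_6(\Omega)$ (with the threshold $q\ge 3$ coming, as you correctly identify, from the term $\Delta v\cdot\partial_\eta\Phi$), and the same compactness argument at the end to absorb $\|\Phi\|_{L_2(\Omega)}$. Your version buys a simpler a~priori estimate --- the constant-coefficient Dirichlet Laplacian in the $C^{1,1}$ domain $D$ has trivial kernel, so no lower-order term is actually needed --- at the price of the slicing bookkeeping: you must justify that for a.e.\ $\eta$ the slice is a weak solution of the 2D problem with right-hand side $G(\cdot,\cdot,\eta)$ and that the slicewise second derivatives coincide with the distributional derivatives on $\Omega$, both of which require the Nikodym/Fubini argument the paper invokes elsewhere (Lemma~\ref{below}). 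The paper avoids these steps but has to inspect Grisvard's proof to obtain an explicit constant. Two small remarks: the signs of all non-$F$ terms in your displayed $G$ are flipped (one should have $-\varepsilon^2\Delta_{x,y}\Phi=F-2\varepsilon^2\eta\frac{\partial_x v}{1+v}\partial_x\partial_\eta\Phi-\cdots$), which of course does not affect the $L_2$-estimate; and the parenthetical ``up to a lower-order $L_2$-term'' in your a~priori estimate is unnecessary and can be dropped.
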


\begin{proof}[{\bf Proof}]
Consider $F\in L_2(\Omega)$ and denote the corresponding weak solution to \eqref{231} by $\Phi$. Introducing 
\begin{equation*}
\begin{split}
J(x,y,\eta) & := 2\e^2\ \eta\ \left( \frac{\partial_x v}{1+v} \right)(x,y)\ \partial_x\partial_\eta \Phi(x,y,\eta) + 2\e^2\ \eta\ \left(  \frac{\partial_y v}{1+v} \right)(x,y)\ \partial_y\partial_\eta \Phi(x,y,\eta)\\
& \quad + \left( 1 - \frac{1+\e^2\eta^2 |\nabla v|^2}{(1+v)^2} \right)(x,y)\ \partial_\eta^2 \Phi(x,y,\eta) \\
& \quad - \e^2\ \eta\ \left( 2\ \frac{|\nabla v|^2}{(1+v)^2} - \frac{\Delta v}{1+v} \right)(x,y)\ \partial_\eta \Phi(x,y,\eta)\ ,
\end{split}
\end{equation*}
it follows from \eqref{231} that $\Phi$ solves
$$
\e^2\ \partial_x^2 \Phi + \e^2\ \partial_y^2 \Phi + \partial_\eta^2 \Phi = J \;\;\text{ in }\;\; \Omega\ , \qquad \Phi=0 \;\;\text{ on }\;\; \partial\Omega\ .  
$$
Moreover, Lemma~\ref{L2b} and the continuous embeddings of $W_q^2(D)$ in $W_\infty^1(D)$ and $W_2^1(\Omega)$ in $L_6(\Omega)$ guarantee that $J$ belongs to $L_2(\Omega)$ with
$$
\| J\|_{L_2(\Omega)}^2 \le c(\kappa,\ve) \left( \|\Phi\|_{L_2(\Omega)}^2  + \|F\|_{L_2(\Omega)}^2  \right)\ .
$$
We then infer from \cite[Theorem~3.2.1.2]{Grisvard} that $\Phi\in W_2^2(\Omega)$ and inspecting the proof of \cite[Theorem~3.2.1.2]{Grisvard} along with \cite[Theorem~3.1.3.1 \& Lemma~3.2.1.1]{Grisvard} ensures that 
$$
\| \Phi\|_{W_2^2(\Omega)} \le c(\kappa,\ve) \| J\|_{L_2(\Omega)}\ .
$$
We have thus shown that $\Phi\in W_2^2(\Omega)$ and satisfies
$$
\| \Phi\|_{W_2^2(\Omega)} \le c(\kappa,\ve) \left( \|\Phi\|_{L_2(\Omega)}^2  + \|F\|_{L_2(\Omega)}^2  \right)\ .
$$ 
Finally, since the embeddings of $W_2^2(\Omega)$ in $W_2^1(\Omega)$ and $W_q^2(D)$ in $C^1(\bar D)$ are compact, we may proceed as in the proof of \cite[Eq.~(19)]{ELW1} to derive \eqref{lazy7}.
\end{proof}

\medskip

After these preliminary steps we are in a position to prove Proposition~\ref{L1}.

\begin{proof}[{\bf Proof of Proposition~\ref{L1}}]
For $v\in S_q(\kappa)$ and $(x,y,\eta)\in\Omega$, we set 
$$
f_v(x,y,\eta) := \mathcal{L}_v \eta = \varepsilon^2\ \eta \left[ 2\ \frac{|\nabla v(x,y)|^2}{1+v(x,y)^2} - \frac{\Delta v(x,y)}{1+v(x,y)} \right]\ .
$$
Since $W_q^2(D)$ is embedded in $W_\infty^1(D)$, the function $f_v$ belongs to $L_2(\Omega)$ with 
\begin{equation}
\| f_v \|_{L_2(\Omega)} \le c_6(\kappa,\varepsilon)\ , \label{y19}
\end{equation}
and Proposition~\ref{PLazy} ensures that there is a unique solution $\Phi_v\in W_{2,\mathcal{B}}^2(\Omega)$ to
\begin{equation*}
-\mathcal{L}_v\Phi_v = f_v\ \text{ in }\ \Omega\ ,\ \qquad \Phi_v =0\   \text{ on }\ \partial\Omega\ , 
\end{equation*}
satisfying
\begin{equation}
\|\Phi_v\|_{W_2^2(\Omega)}\le c_5(\kappa,\varepsilon)\ {  \|f_v\|_{L_2(\Omega)}}\ . \label{gaston}
\end{equation}
Setting $\phi_v(x,y,\eta)=\Phi_v(x,y, \eta)+\eta$ for $(x,y,\eta) \in \overline{\Omega}$, the function $\phi_v$ obviously solves \eqref{23}-\eqref{24} while we deduce from \eqref{y19} and \eqref{gaston} that
\begin{equation}\label{c4}
\|\phi_v\|_{W_2^2(\Omega)}\le c_7(\kappa,\varepsilon)\ .
\end{equation}
We next define a bounded linear operator $\mathcal{A}(v)\in\mathcal{L}\big(W_{2,\mathcal{B}}^2(\Omega), L_2(\Omega)\big)$ by
$$
\mathcal{A}(v)\Phi:=-\mathcal{L}_v\Phi\ ,\quad \Phi\in W_{2,\mathcal{B}}^2(\Omega)\ .
$$
Proposition~\ref{PLazy} guarantees that $\mathcal{A}(v)$ is invertible with inverse $\mathcal{A}(v)^{-1} \in \mathcal{L}\big(L_2(\Omega),W_{2,\mathcal{B}}^2(\Omega))$ satisfying
\begin{equation}
\left\| \mathcal{A}(v)^{-1} \right\|_{\mathcal{L}\big(L_2(\Omega),W_{2,\mathcal{B}}^2(\Omega))} \le c_5(\kappa,\varepsilon)\ . \label{y25}
\end{equation}
We then note that
\begin{equation}
\|\mathcal{A}(v_1)-\mathcal{A}(v_2)\|_{\mathcal{L}(W_{2,\mathcal{B}}^2(\Omega),L_2(\Omega))}\le c_8(\kappa,\varepsilon)\ \|v_1-v_2\|_{W_q^2(D)}\ ,\quad v_1, v_2\in S_q(\kappa)\ ,\label{y26}
\end{equation}
which follows from the definition of $\mathcal{L}_v$ and the continuity of pointwise multiplication
$$
W_q^1(D)\cdot W_q^1(D)\hookrightarrow W_q^1(D) \hookrightarrow L_\infty(D)
$$
except for the terms involving $\partial_x^2 v_i$ and $\partial_y^2 v_i$, $i=1,2$, where continuity of pointwise multiplication
$$
L_q(D)\cdot W_2^1(\Omega)\hookrightarrow L_2(\Omega)\ ,
$$
the latter being true thanks to the continuous embedding of $W_2^1(\Omega)$ in $L_6(\Omega)$ and the choice $q\ge 3$. Similar arguments also show that
\begin{equation}
\left\| f_{v_1} - f_{v_2} \right\|_{L_2(\Omega)} \le c_9(\kappa,\ve) \ \|v_1-v_2\|_{W_q^2(D)}\ ,\quad v_1, v_2\in S_q(\kappa)\ . \label{y26a}
\end{equation}
Now, for $v_1, v_2\in S_q(\kappa)$, we infer from \eqref{y25} and \eqref{y26} that
\begin{equation*}
\|\mathcal{A}(v_1)^{-1}   -\mathcal{A}(v_2)^{-1}\|_{\mathcal{L}(L_2(\Omega),W_{2,\mathcal{B}}^2(\Omega))} \le  c_{10}(\kappa,\varepsilon)\ \|v_1-v_2\|_{W_q^2({D})}\ ,
\end{equation*}
which, combined with \eqref{y19}, \eqref{y25}, \eqref{y26a}, and the observation that $\phi_v=\mathcal{A}(v)^{-1}  f_v$ implies \eqref{RR}. 

\medskip

Since $W_2^{1/2}(D)$ embeds continuously in~$L_4(D)$, pointwise multiplication
\begin{equation}\label{m*}
W_2^{1/2}(D)\cdot W_2^{1/2}(D)\hookrightarrow L_2(D)
\end{equation}
is continuous and hence, invoking \cite[Chapter~2, Theorem~5.5]{Necas67} (since $\Omega=D\times (0,1)$ is a bounded Lipschitz domain),  the mapping
\begin{equation}
S_q(\kappa)\rightarrow L_2(D)\ ,\quad v\mapsto \big\vert\partial_\eta \phi_v(\cdot,\cdot,1)\big\vert^2 \label{c12a}
\end{equation}
is bounded and globally Lipschitz continuous. Thanks to the continuity of the embedding of $W_q^1(D)$ in $L_\infty(D)$, the mapping
\begin{equation}
S_q(\kappa)\rightarrow W_q^1(D)\ ,\quad v\mapsto \frac{1+\varepsilon^2 |\nabla v|^2}{(1+v)^2} \label{c12b}
\end{equation}
is bounded and globally Lipschitz continuous with a Lipschitz constant depending only on $\kappa$ and $\varepsilon$, and the Lipschitz continuity of $g_\ve$ stated in Proposition~\ref{L1} follows at once from those of the mappings in \eqref{c12a} and~\eqref{c12b}. Finally, to prove that $g_\ve$ is analytic, we note that $S_q(\kappa)$ is open in $W_{q,\mathcal{B}}^2(D)$ and that the mappings 
$$
\mathcal{A}: S_q(\kappa)\rightarrow\mathcal{L}(W_{2,\mathcal{B}}^2(\Omega),L_2(\Omega))\quad \text{and}\quad [v\mapsto f_v]: S_q(\kappa)\rightarrow L_2(\Omega)
$$ 
are analytic. The analyticity of the inversion map $\ell\mapsto\ell^{-1}$ for bounded operators implies that also the mapping 
$$
S_q(\kappa)\rightarrow W_2^2(\Omega) \ ,\quad v\mapsto \phi_v=\mathcal{A}(v)^{-1}  f_v
$$ 
is analytic, and the assertion follows as above from \eqref{c12a} and \eqref{c12b}.
\end{proof}

Let us point out that $g_\ve$ also maps $S_q(\kappa)$ into a (non-Hilbert) space of more regularity.

\begin{cor}\label{C11}
Suppose \eqref{D}. Let $\varepsilon>0$, $\kappa\in (0,1)$, and $q\ge 3$. For $p\in [1,2)$ and $\sigma\in (0,1/2)$ with $\sigma < (2-p)/p$, the mapping
$$
g_\ve: S_q(\kappa)\longrightarrow W_{p,\mathcal{B}}^{\sigma}(D)\ ,\quad v\longmapsto \frac{1+\e^2 |\nabla v|^2}{(1+v)^2}\  \vert\partial_\eta\phi_v(\cdot,\cdot,1)\vert^2
$$
is analytic, globally Lipschitz continuous, and bounded. 
\end{cor}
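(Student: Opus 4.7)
My plan is to refine the final step of the proof of Proposition~\ref{L1}, replacing the coarse bilinear embedding \eqref{m*} into $L_2(D)$ by a sharper bilinear multiplication valued in $W_p^\sigma(D)$. The starting point is everything already delivered by Proposition~\ref{L1}: the map $v\mapsto \phi_v$ is analytic, bounded, and globally Lipschitz from $S_q(\kappa)$ into $W_2^2(\Omega)$. Composing with $\partial_\eta$ and then with the trace operator on $D\times\{1\}$, which by \cite[Chapter~2, Theorem~5.5]{Necas67} is continuous from $W_2^1(\Omega)$ into $W_2^{1/2}(D)$, shows that $v\mapsto \partial_\eta\phi_v(\cdot,\cdot,1)$ is analytic, bounded, and globally Lipschitz from $S_q(\kappa)$ into $W_2^{1/2}(D)$. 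In parallel, \eqref{c12b} in the proof of Proposition~\ref{L1} gives that $v\mapsto (1+\e^2|\nabla v|^2)/(1+v)^2$ has the same three properties as a map from $S_q(\kappa)$ into $W_q^1(D)$.

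The key analytic ingredient is the continuous pointwise multiplication
\begin{equation*}
W_2^{1/2}(D)\cdot W_2^{1/2}(D) \hookrightarrow W_p^\sigma(D)
\end{equation*}
under the stated constraints $p\in[1,2)$ and $\sigma\in(0,1/2)$ with $\sigma<(2-p)/p$. This is exactly the classical Besov/Sobolev multiplication theorem applied with $n=2$, $p_1=p_2=2$, $s_1=s_2=1/2$: the two scaling thresholds it imposes are $\sigma\le\min(s_1,s_2)=1/2$ and $\sigma<s_1+s_2-n(1/p_1+1/p_2-1/p)=(2-p)/p$, which are precisely the two conditions appearing in the statement. (Alternatively, it can be derived by a direct Gagliardo-seminorm computation, writing $w^2(x)-w^2(y)=(w(x)+w(y))(w(x)-w(y))$ and using Hölder's inequality with exponents $2/p$ and $2/(2-p)$ together with the 2D Sobolev embedding $W_2^{1/2}(D)\hookrightarrow L_r(D)$ for all $r<\infty$.) Because squaring is a continuous polynomial, its analyticity together with this multiplication yields that $v\mapsto |\partial_\eta\phi_v(\cdot,\cdot,1)|^2$ is analytic, globally Lipschitz, and bounded from $S_q(\kappa)$ into $W_p^\sigma(D)$.

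It remains to multiply by the prefactor $h_v:=(1+\e^2|\nabla v|^2)/(1+v)^2\in W_q^1(D)$. Since $q\ge 3>2$, one has $W_q^1(D)\hookrightarrow C^{0,1-2/q}(\bar D)$, and a direct Gagliardo-norm computation (splitting via Hölder's inequality with exponents $q/p$ and $q/(q-p)$, using that $|\partial_\eta\phi_v(\cdot,\cdot,1)|^2$ in fact belongs to every $L_r(D)$ with $r<\infty$) shows that pointwise multiplication $W_q^1(D)\cdot W_p^\sigma(D)\hookrightarrow W_p^\sigma(D)$ is continuous in the relevant index range. Finally, since $p<2$ gives $1/p>1/2>\sigma$, the boundary-condition part of the definition of $W_{p,\mathcal{B}}^\sigma(D)$ is vacuous, so $W_{p,\mathcal{B}}^\sigma(D)=W_p^\sigma(D)$, and the asserted analyticity, global Lipschitz continuity, and boundedness of $g_\ve$ follow by combining the three steps above. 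The main obstacle is the bilinear multiplication in step two; all other ingredients are routine consequences of Proposition~\ref{L1} and standard embedding theorems.
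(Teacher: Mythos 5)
Your overall strategy mirrors the paper's: trace onto $D\times\{1\}$ to land $\partial_\eta\phi_v(\cdot,\cdot,1)$ in $W_2^{1/2}(D)$, use a bilinear multiplication theorem to place its square in a fractional Sobolev space with $p<2$, and then multiply by the prefactor $(1+\e^2|\nabla v|^2)/(1+v)^2\in W_q^1(D)$. The first multiplication you invoke, $W_2^{1/2}(D)\cdot W_2^{1/2}(D)\hookrightarrow W_p^\sigma(D)$, with the scaling constraint $\sigma<(2-p)/p$ and $\sigma<1/2$, is exactly what the paper extracts from \cite[Theorem~4.1 \& Remarks~4.2(d)]{AmannMultiplication}. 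So far this is the same argument.

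The gap is in your treatment of the \emph{second} multiplication. You claim $W_q^1(D)\cdot W_p^\sigma(D)\hookrightarrow W_p^\sigma(D)$ with no loss, justified by a Gagliardo-seminorm computation that relies on the assertion that $\partial_\eta\phi_v(\cdot,\cdot,1)\in W_2^{1/2}(D)$ implies $|\partial_\eta\phi_v(\cdot,\cdot,1)|^2\in L_r(D)$ for all $r<\infty$. That assertion is false: in two dimensions $W_2^{1/2}(D)\hookrightarrow L_4(D)$ only, so the square lies in $L_2(D)$, not in every $L_r$. Moreover, even setting that aside, the elementary Gagliardo splitting you sketch controls the difference $|h_v(x)-h_v(y)|$ only through the H\"older seminorm of $h_v$, which for $h_v\in W_q^1(D)$ with $q\ge 3$ gives the exponent $1-2/q\ge 1/3$; the resulting singular integral converges only when $\sigma<1-2/q$. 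Since the statement allows $\sigma$ up to $\min(1/2,(2-p)/p)$, which exceeds $1/3$ when $q=3$ and $p<4/2=3/2$, your direct computation does not cover the stated range. The paper avoids precisely this borderline ($s=\min(s_1,s_2)$) case by inserting an intermediate index $\mu$: it first obtains $|\partial_\eta\phi_v(\cdot,\cdot,1)|^2\in W_p^\mu(D)$ with $\mu\in(0,1/2]$, $\mu<(2-p)/p$, and then applies \cite[Theorem~4.1]{AmannMultiplication} to get $W_q^1(D)\cdot W_p^\mu(D)\hookrightarrow W_p^\sigma(D)$ for every $\sigma<\mu$, which is a non-borderline application. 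Your conclusion is the same, but you should either cite a multiplication theorem that genuinely covers the no-loss endpoint $W_q^1\cdot W_p^\sigma\to W_p^\sigma$ for $q>2$ in 2D, or adopt the paper's intermediate index $\mu>\sigma$; the embedding claim about $L_r$ and the elementary H\"older computation as written do not close the argument.

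Your observation that the boundary conditions in $W_{p,\mathcal{B}}^\sigma(D)$ are vacuous because $\sigma<1/2<1/p$ is correct and matches the paper's convention.
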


\begin{proof} Given $p\in [1,2)$ and $\mu\in (0,1/2]$ with $\mu < (2-p)/p$, we may replace \eqref{m*} in the proof of Proposition~\ref{L1} by the pointwise multiplication
\begin{equation}\label{m**}
W_2^{1/2}(D)\cdot W_2^{1/2}(D)\hookrightarrow W_p^{\mu}(D)\,,
\end{equation}
which is continuous according to \cite[Theorem~4.1 \& Remarks~4.2(d)]{AmannMultiplication}, and deduce again from \cite[Chapter~2, Theorem~5.5]{Necas67} that the mapping
\begin{equation}
S_q(\kappa)\rightarrow W_p^{\mu}(D)\ ,\quad v\mapsto \big\vert\partial_\eta \phi_v(\cdot,\cdot,1)\big\vert^2 \label{c12aa}
\end{equation}
is globally Lipschitz continuous. Moreover, due to the continuity of the pointwise multiplication
$$
W_q^{1}(D)\cdot W_p^{\mu}(D)\hookrightarrow W_p^{\sigma}(D)
$$
for any $\sigma<\mu$, see again \cite[Theorem~4.1]{AmannMultiplication}, the claimed Lipschitz continuity of $g_\ve$  follows at once from \eqref{c12b} and \eqref{c12aa}, the proof of the analyticity of $g_\ve$ being the same as in Proposition~\ref{L1}.
\end{proof}

\bigskip

\section{Proof of Theorem~\ref{A}}\label{Sec3}

We now turn to the proof of Theorem~\ref{A}. Let us first note that, using the notation from the previous section and noticing that $\partial_x\phi_u(x,y,1)=\partial_y\phi_u(x,y,1)=0$ for $(x,y)\in D$ due to $\phi_u(x,y,1)=1$ by~\eqref{24}, the boundary value problem \eqref{u}-\eqref{bcpsi} can be stated equivalently as a single nonlinear equation for $u$ of the form
\begin{align}
\partial_t u+\beta\Delta^2 u-\big(\tau +a\|\nabla u\|_2^2\big)\Delta u  &= {-\lambda}\, \frac{1+\e^2 |\nabla u|^2}{(1+u)^2}\, \vert\partial_\eta\phi_u(\cdot,\cdot,1)\vert^2\  \label{33}
\end{align}
subject to the boundary conditions~\eqref{bcu} and the initial condition~\eqref{ic}.
To analyze this equation it is useful to write it as an abstract Cauchy problem to which semigroup theory applies.
Let $\xi>0$ be fixed such that $7/3 < 4\xi <4$ and consider $u^0\in W_{2,\mathcal{B}}^{4\xi}(D)$ with $u^0>-1$ on $D$. Owing to the continuous embedding of $W_2^{4\xi}(D)$ in $W_3^2(D)$ and $C(\bar{D})$, there are $\bar{c}>1$ and $\kappa\in (0,1/2)$ such that 
$$
\| w \|_{W_3^2(D)} \le \bar{c}\ \| w \|_{W_2^{4\xi}(D)}\ , \qquad w\in W_2^{4\xi}(D)\ ,
$$
and $u^0 \in S_3(2\kappa)$ with $\|u^0\|_{W_2^{4\xi}(D)} \le 1/(2\kappa)$. Next, define the operator $A\in \mathcal{L}(W_{2,\mathcal{B}}^4(D),L_2(D))$ by
\begin{equation}\label{AD}
Aw:=\beta\Delta^2 w-\tau\Delta w\ ,\quad w\in W_{2,\mathcal{B}}^4(D)\ ,
\end{equation}
and recall that $-A$ generates an exponentially decaying analytic semigroup on $L_2(D)$ with
\begin{equation}\label{sgexp}
\|e^{-tA}\|_{\mathcal{L}(W_{2,D}^{4\xi}(D))} + \|e^{-tA}\|_{\mathcal{L}(W_{2,D}^{1}(D))}+ t^\xi\,\|e^{-tA}\|_{\mathcal{L}(L_2(D),W_{2,D}^{4\xi}(D))}\, \le  M e^{-\omega t}\ ,\quad t\ge 0\ ,
\end{equation}
for some $\omega>0$ and $M\ge 1$. Set $\kappa_0 := \kappa/(M\bar{c})\in (0,\kappa)$ and introduce the mapping $h$ defined by
$$
h(v):=-\lambda g_\ve (v)+ a \|\nabla v\|_2^2\, \Delta v\ ,\quad v\in \overline{S}_3(\kappa_0)\ .
$$
According to Proposition~\ref{L1}, $h$ is well-defined on $\overline{S}_3(\kappa_0)$ and there is a constant $C_1(\kappa):=c_1(\kappa_0,\ve)$ with
\begin{equation}
\|h(v)\|_{L_2(D)} \le C_1(\kappa) \left( \lambda + a \|\nabla v \|_2^2 \right) \|v\|_{W_3^2(D)}\ , \qquad v\in \overline{S}_3(\kappa_0)\ , \label{lazyh} 
\end{equation}
and 
\begin{equation}\label{hh}
\| h(v_1)-h(v_2)\|_{L_2(D)}\le C_1(\kappa) \left[ \lambda + a \left( \|\nabla v_1\|_2 + \|\nabla v_2\|_2 \right) \right] \| v_1-v_2\|_{W_3^2(D)}\,,\quad v_1, v_2 \in \overline{S}_3(\kappa_0) \, .
\end{equation} 
Consequently,  we may rewrite \eqref{33} as a semilinear Cauchy problem for $u$ of the form
\begin{equation}\label{CP}
\partial_t u + A u= h(u)\ ,\quad t>0\ ,\qquad u(0)=u^0\ .
\end{equation}
Choosing $\vartheta > 2M \|\nabla u^0\|_2$ we define for $T>0$ the complete metric space 
$$
\mathcal{V}_T(\kappa,\vartheta) := \left\{ v\in C([0,T],\overline{S}_3(\kappa_0))\ :\ \sup_{t\in [0,T]} \|\nabla v(t)\|_{L_2(D)} \le \vartheta \right\} 
$$
endowed with the metric induced by the norm in $C([0,T],\overline{S}_3(\kappa_0))$. Arguing as in the proofs of \cite[Theorem~1]{ELW1} and \cite[Proposition~3.2~(iii)]{Bending1} with the help of \cite[Chapter~II, Theorem~5.3.1]{AmBk}, we readily deduce from \eqref{sgexp}, \eqref{lazyh}, and \eqref{hh} that the mapping $\Lambda$, given by
$$
\Lambda(v)(t):=e^{-tA} u^0 + \int_0^t e^{-(t-s)A} h\big(v(s)\big)\,\rd s\, ,\quad t\in [0,T]\, ,\quad v\in C([0,T],\overline{S}_3(\kappa_0))\,,
$$
defines a contraction on $\mathcal{V}_T(\kappa,\vartheta)$ provided that $T:=T(\lambda,\kappa,\vartheta)>0$ is sufficiently small. Thus $\Lambda$ has a unique fixed point $u$ in $\mathcal{V}_T(\kappa,\vartheta)$ which is a solution to \eqref{CP} with the regularity properties stated in \eqref{reg}.  This readily implies parts~(i)-(ii) of Theorem~\ref{A}. To prove the global existence claimed in part~(iii) of Theorem~\ref{A}  we use the fact that $\omega>0$ in \eqref{sgexp} and proceed as in the proof of  \cite[Theorem~1]{ELW1} to establish that there is $m_*(\kappa)>0$ such that $\Lambda$ is a contraction on $\mathcal{V}_T(\kappa,\vartheta)$ for {\it each} $T>0$ provided that  $\lambda +a\vartheta\le m_*(\kappa)$. Thus $u$ is a global solution to \eqref{CP}, whence Theorem~\ref{A}~(iii).

Finally, to prove part~(iv) of Theorem~\ref{A}, let $D$ be a disc in $\R^2$. Introducing then $\tilde u$ as an arbitrary rotation of $u$ with respect to $(x,y)\in D$, the rotational invariance of \eqref{psi} with respect to $(x,y)\in D$ implies that $\tilde u$ is again a solution to \eqref{CP} and thus coincides with $u$ by uniqueness. This yields Theorem~\ref{A}~(iv).

\begin{rem}\label{R1}
Besides the proof of Proposition~\ref{L1}, which requires a different approach, the range of the map $g_\varepsilon$ identified in Proposition~\ref{L1} and Corollary~\ref{C11} appears to be the main difference between the two-dimensional case considered in \cite{ELW1} and the  three-dimensional case considered herein. Since the space $W_p^{2+\sigma}(D)$ {\em does not} embed in $W_3^{2}(D)$ under the constraints $\sigma\in (0,1/2)$ and $\sigma<(2-p)/p$ with $p\in (1,2)$ stated in Corollary~\ref{C11}, we cannot identify $\Lambda$ from the previous proof as a contraction on a suitable space when dealing with the second-order problem~$\beta=0$.
\end{rem}

\section{Proof of Theorem~\ref{Stat}}\label{Sec4}

Let $\kappa\in (0,1)$ and note that $W_2^4(D)$ embeds continuously in $W_3^2(D)$. Define the operator $A$ as in \eqref{AD} and note that, since $A\in \mathcal{L}(W_{2,\mathcal{B}}^4(D),L_2(D))$ is invertible, the mapping
$$
\mathcal{F}:\R\times \big(W_{2,\mathcal{B}}^4(D)\cap S_3(\kappa)\big)\rightarrow W_{2,\mathcal{B}}^4(D)\ ,\quad (\lambda,v)\mapsto v+\lambda A^{-1}g_\ve(v) - a\|\nabla v\|_2^2A^{-1}\Delta v
$$
is analytic with $\mathcal{F}(0,0)=0$ and $D_v\mathcal{F}(0,0) =\mathrm{id}_{W_{2,\mathcal{B}}^4(D)}$. Now, the Implicit Function Theorem ensures the existence of $\delta=\delta(\kappa,\ve)>0$ and a branch $(U_\lambda)_{\lambda\in [0,\delta)}$ in $W_{2,\mathcal{B}}^4(D)$ such that $\mathcal{F}(\lambda,U_\lambda)=0$ for all $\lambda\in [0,\delta)$.  Denoting the solution to \eqref{psi}-\eqref{bcpsi} corresponding to $U_\lambda$ by $\Psi_\lambda\in W_2^2(\Omega(U_\lambda))$, the pair $(U_\lambda,\Psi_\lambda)$ is thus for each $\lambda\in (0,\delta)$ a stationary solution to \eqref{u}-\eqref{bcpsi}. This proves the existence part of Theorem~\ref{Stat}. 

We next use the Principle of Linearized Stability \cite[Theorem~9.1.1]{Lunardi} as in \cite[Theorem~3]{ELW1} to obtain the following proposition, which completes the proof of Theorem~\ref{Stat}:

\begin{prop}
Let $\lambda\in (0,\delta(\kappa,\ve))$. There are $\omega_0,r_0,R>0$ such that for each initial value $u^0\in W_{2,\mathcal{B}}^{4}(D)$ with $\|u^0-U_\lambda\|_{W_{2,\mathcal{B}}^{4}(D)} <r_0$, the solution $(u,\psi)$ to \eqref{u}-\eqref{bcpsi} exists globally in time and
\begin{equation*}\label{est}
\|u(t)-U_\lambda\|_{W_{2,\mathcal{B}}^{4}(D)}+\|\partial_t u(t)\|_{L_{2}(D)} \le R e^{-\omega_0 t} \|u^0-U_\lambda\|_{W_{2,\mathcal{B}}^{4}(D)}\ ,\quad t\ge 0\ .
\end{equation*}
\end{prop}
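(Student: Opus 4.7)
The plan is to apply the Principle of Linearized Stability \cite[Theorem~9.1.1]{Lunardi} to the abstract Cauchy problem~\eqref{CP} near the equilibrium $U_\lambda$, in direct analogy with the two-dimensional argument of \cite[Theorem~3]{ELW1}. Setting $w := u - U_\lambda$, I would rewrite~\eqref{CP} in the form
\begin{equation*}
\partial_t w + \bigl( A - Dh(U_\lambda) \bigr) w = \mathcal{N}(w)\ ,\qquad w(0) = u^0 - U_\lambda\ ,
\end{equation*}
where $\mathcal{N}(w) := h(U_\lambda+w) - h(U_\lambda) - Dh(U_\lambda)w$. Since $g_\ve : S_3(\kappa_0) \to L_2(D)$ is analytic by Proposition~\ref{L1} and $v \mapsto a \|\nabla v\|_2^2 \Delta v$ is analytic from $W_{3,\mathcal{B}}^2(D)$ into $L_2(D)$, the map $h$ is analytic on $S_3(\kappa_0)$ with values in $L_2(D)$; in particular $\mathcal{N}(w) = o(\|w\|_{W_3^2(D)})$, and this transfers to the $W_{2,\mathcal{B}}^4(D)$-topology via the continuous embedding used throughout Section~\ref{Sec3}.

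The decisive step is to verify the spectral hypothesis of Lunardi's theorem, namely that the realization of $A - Dh(U_\lambda)$ in $L_2(D)$ with domain $W_{2,\mathcal{B}}^4(D)$ has spectrum contained in $\{\mathrm{Re}\,z \ge \omega_0\}$ for some $\omega_0 > 0$. By~\eqref{sgexp}, $-A$ generates an exponentially decaying analytic semigroup on $L_2(D)$, so the spectrum of $A$ lies in $\{\mathrm{Re}\,z \ge \omega\}$ with $\omega > 0$. Differentiating $h$ gives
\begin{equation*}
Dh(U_\lambda)[w] = -\lambda\, Dg_\ve(U_\lambda)[w] + 2a\, \langle \nabla U_\lambda, \nabla w \rangle_{L_2(D)}\, \Delta U_\lambda + a\, \|\nabla U_\lambda\|_2^2\, \Delta w\ ,
\end{equation*}
and the construction of $U_\lambda$ via the Implicit Function Theorem at $(0,0)$ yields $U_\lambda \to 0$ in $W_2^4(D)$ as $\lambda \to 0$. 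Combining this with the boundedness of $Dg_\ve(U_\lambda) \in \mathcal{L}(W_{3,\mathcal{B}}^2(D), L_2(D))$ furnished by Proposition~\ref{L1}, all three contributions to $Dh(U_\lambda)$ are small in $\mathcal{L}(W_{2,\mathcal{B}}^4(D), L_2(D))$ for small $\lambda$. Upon possibly shrinking $\delta(\kappa,\ve)$, standard perturbation theory for sectorial operators then ensures that $A - Dh(U_\lambda)$ remains sectorial with a spectral gap $\omega_0 = \omega_0(\lambda) > 0$ for each $\lambda \in (0,\delta(\kappa,\ve))$.

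With the spectral gap in hand, \cite[Theorem~9.1.1]{Lunardi}, applied with $X_0 = L_2(D)$ and $X_1 = W_{2,\mathcal{B}}^4(D) = D(A)$, provides constants $r_0, R > 0$ such that if $\|u^0 - U_\lambda\|_{W_{2,\mathcal{B}}^4(D)} < r_0$, then the solution to~\eqref{CP} exists globally in time and obeys
\begin{equation*}
\|u(t) - U_\lambda\|_{W_{2,\mathcal{B}}^4(D)} \le R\, e^{-\omega_0 t}\, \|u^0 - U_\lambda\|_{W_{2,\mathcal{B}}^4(D)}\ ,\qquad t \ge 0\ .
\end{equation*}
The bound on $\|\partial_t u(t)\|_{L_2(D)}$ is then recovered directly from the equation $\partial_t u = -Au + h(u)$, using the Lipschitz estimate~\eqref{hh} for $h$ and the continuous embedding $W_{2,\mathcal{B}}^4(D) \hookrightarrow W_{3,\mathcal{B}}^2(D)$; reconstructing $\psi(t)$ from $u(t)$ via Proposition~\ref{L1} completes the argument.

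The main obstacle is the spectral perturbation step: one must quantify in what precise sense $Dh(U_\lambda)$ is a ``small'' perturbation of $-A$ as $\lambda \to 0$, exploiting jointly the prefactor $\lambda$ in front of $Dg_\ve(U_\lambda)$, the vanishing of $\|U_\lambda\|_{W_2^4(D)}$, and the boundedness of $Dg_\ve$ stemming from the global Lipschitz estimate of Proposition~\ref{L1}. Once this is in place, the remainder of the argument transposes verbatim from the two-dimensional setting of \cite[Theorem~3]{ELW1}, the three-dimensional geometry entering only through the functional-analytic framework established in Section~\ref{Sec2}.
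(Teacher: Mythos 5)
Your proposal is correct and follows the same route the paper indicates: rewrite \eqref{CP} as an abstract evolution equation near the equilibrium $U_\lambda$, verify via a spectral perturbation argument that the linearized operator $A-Dh(U_\lambda)$ retains the spectral gap enjoyed by $A$ for $\lambda$ small (possibly shrinking $\delta(\kappa,\ve)$), and invoke the Principle of Linearized Stability \cite[Theorem~9.1.1]{Lunardi} exactly as in \cite[Theorem~3]{ELW1}. Your identification of the three sources of smallness in $Dh(U_\lambda)$ — the explicit prefactor $\lambda$, the decay $U_\lambda\to 0$ in $W_2^4(D)$ from the Implicit Function Theorem, and the uniform bound on $Dg_\ve$ coming from the global Lipschitz estimate of Proposition~\ref{L1} — together with the recovery of the $\|\partial_t u\|_{L_2(D)}$ bound from the equation and the stationarity identity $AU_\lambda=h(U_\lambda)$, faithfully reproduces the intended argument.
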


Using Corollary~\ref{C11} we can actually improve the regularity of $U_\lambda$ slightly. Indeed, Corollary~\ref{C11} and the fact that $U_\lambda\in W_2^4(D)$ entail that the right-hand side of
$$
A U_\lambda = - \lambda g_\ve(U_\lambda) + a\ \|\nabla U_\lambda\|_2^2\ \Delta U_\lambda
$$
belongs to $W_{p,\mathcal{B}}^\sigma(D)$ for $\sigma\in (0,1/2)$, $\sigma<(2-p)/p$, and $p\in (1,2)$. Now the invertibility of the operator $A$  in $\mathcal{L}(W_{p,\mathcal{B}}^{4+\sigma}(D),W_{p,\mathcal{B}}^\sigma(D))$ implies  that $U_\lambda$ actually belongs to $W_{p,\mathcal{B}}^{4+\sigma}(D)$ for $\lambda\in [0,\delta(\kappa,\ve))$.

\section{Proof of Theorem~\ref{StatDisc}}\label{Sec5}
 
In this section, we restrict ourselves to the particular case where $D$ is a disc in $\R^2$ and assume for simplicity that $D$ is the unit disc in $\R^2$. 

\medskip

To prove Theorem~\ref{StatDisc}~(i) we argue as in the proof of Theorem~\ref{A}~(iv). Let $\lambda\in (0,\delta(\kappa,\ve))$. Since \eqref{psi} is rotationally invariant with respect to $(x,y)\in D$, any rotation of $U_\lambda$ is again a solution to \eqref{u} in $W_{2,\mathcal{B}}^4(D)\cap S_3(\kappa)$ and thus coincides with $U_\lambda$ by the uniqueness assertion of Theorem~\ref{Stat}. The non-positivity of $U_\lambda$ then follows from~\cite{LW_Boggio} since $g_\ve(U_\lambda)<0$.

\medskip

Next, the proof of the non-existence statement in Theorem~\ref{StatDisc}~(ii) is a straightforward adaptation of that of \cite[Theorem~1.7~(ii)]{Bending1} which is based on a nonlinear version of the eigenfunction technique. We thus omit the proof herein but mention that it heavily relies on the existence of a positive eigenfunction $\zeta_1$ of the operator $\beta\Delta^2 - \tau \Delta$ in $W_{2,\mathcal{B}}^4(D)$ associated to a positive eigenvalue $\mu_1$. This result follows from \cite[Theorem~4.7]{LW_Boggio} and we emphasize that the positivity of $\zeta_1$ is due to a variant of Boggio's principle \cite{Bo05} and requires $D$ to be a disc. Moreover, the assumptions that $D$ is a disc and that the sought-for steady state $u$ is radially symmetric also guarantee that $u$ is negative in $D$ by \cite[Theorem~1.4]{LW_Boggio}. Finally, the radial symmetry plays again an important r\^ole in deriving suitable estimates for the auxiliary function $\mathcal{U}$ defined by 
$$
- \Delta \mathcal{U} = u \;\;\text{ in }\;\; D\ , \qquad \mathcal{U} = 0 \;\;\text{ on }\;\; \partial D\ .
$$
Indeed, $\mathcal{U}$ is obviously radially symmetric and its profile $\bar{\mathcal{U}}$ defined by $\bar{\mathcal{U}}(\sqrt{|x|^2+|y|^2}) := \mathcal{U}(x,y)$ for $(x,y)\in D$ satisfies
$$
|\partial_r \bar{\mathcal{U}}(r)| \le \frac{r}{2}\ , \qquad |\partial_r^2 \bar{\mathcal{U}}(r)| \le \frac{3}{2}\ , \quad r\in [0,1]\ .
$$
The previous bounds follow from the fact that $-1<u<0$ in $D$ and explicit integration of the ordinary differential equation solved by $\bar{\mathcal{U}}$.



\end{document}